\newtheorem{thm}{Theorem}[section]
\newtheorem{lem}[thm]{Lemma}
\newtheorem{cor}[thm]{Corollary}
\newtheorem{prop}[thm]{Proposition}
\theoremstyle{definition}
\newtheorem{dfn}[thm]{Definition}
\newtheorem{prb}[thm]{Problem}
\theoremstyle{remark}
\newcommand{\pmp}{p.m.p.~}
\newcommand{\im}{\operatorname{im}}
\newcommand{\R}{\mathbb{R}}
\newcommand{\N}{\mathbb{N}}
\newcommand{\Z}{\mathbb{Z}}
\newcommand{\baire}{\mathcal{N}}
\newcommand{\inv}{^{-1}}
\newcommand{\ip}[1]{\langle {#1} \rangle} 
\newcommand{\sm}{\smallsetminus}
\newcommand{\bp}[2]{\mathbf{\Pi}_{#2}^{#1}} 
\newcommand{\bs}[2]{\mathbf{\Sigma}_{#2}^{#1}} 
\newcommand{\bd}[2]{\mathbf{\Delta}_{#2}^{#1}} 
\newcommand{\essup}{\operatorname{ess\;sup}}
\newcommand{\cost}{\operatorname{cost}}
\newcommand{\dom}{\operatorname{dom}}
\newcommand{\res}{\upharpoonright}
\newcommand{\out}{\operatorname{out}}
\renewcommand{\o}{\mathfrak{o}}
\newcommand{\m}{\operatorname{\mu}}
\newcommand{\ind}{\operatorname{in}}
\newcommand{\lra}{\Leftrightarrow}
\newcommand{\s}[1]{\mathcal{{#1}}} 
\title{Orienting Borel Graphs}
\author{Riley Thornton}
\address{University of California, Los Angeles \\ Department of Mathematics}
\email{personpants@math.ucla.edu}
\subjclass[2010]{Primary 03E15, secondary 22F10}
\begin{document}

\maketitle

\begin{abstract}
    We investigate when a Borel graph admits a (Borel or measurable) orientation with outdegree bounded by $k$ for various cardinals $k$. We show that for a \pmp graph $G$, a measurable orientation can be found when $k$ is larger than the normalized cost of the restriction of $G$ to any positive measure subset. Using an idea of Conley and Tamuz, we can also find Borel orientations of graphs with subexponential growth; however, for every $k$ we also find graphs which admit measurable orientations with outdegree bounded by $k$ but no such Borel orientations. Finally, for special values of $k$ we bound the projective complexity of Borel $k$-orientability for graphs and graphings of equivalence relations. It follows from these bounds that the set of equivalence relations admitting a Borel selector is $\bs12$ in the codes, in stark contrast to the case of smooth relations.
\end{abstract}

\section{Introduction}
\subsection{Basic Definitions and Background}

In this paper we study descriptive set theoretic variants of the following problem: What is the minimum outdegree we need to orient a given graph?

\begin{dfn}
For $G$ a graph on a set $X$, we say that an orientation $o$ of $G$ is a \textbf{$k$-orientation} if $\sup_{x\in X} \out_o(x)\leq k$. The \textbf{orientation number} of $G$, $\o(G)$, is the least cardinal $k$ so that $G$ is $k$-orientable. Equivalently:
    \[\o(G):=\min\big\{\sup_{x\in X}\out_{o}(x): o\mbox{ an orientation of }G \big\}.\]

If $X$ is a standard Borel space and $\mu$ is a Borel measure on $X$, then the Borel and $\mu$-measurable orientation numbers are defined similarly:
   \[\o_B(G):=\min\big\{\sup_{x\in X}\out_{o}(x): o\mbox{ a Borel orientation of }G \big\}\]
    \[\o_\mu(G):=\min\big\{\sup_{x\in A}\out_{o}(x): o\mbox{ a Borel orientation of }(G\res A), \;\m(X\sm A)=0 \big\}.\]

\end{dfn}

For example, $\o(G)=\lceil d/2\rceil$ whenever $G$ is a $d$-regular finite graph, $\o(G)=1$ whenever $G$ is acyclic, and an undirectable forest of lines, as in \cite[Example 6.8]{KM}, is equivalent to a 2-regular Borel graph with $\o_B(G)=2$.

 These notions have already appeared implicitly in the literature on descriptive graph combinatorics. In \cite{Marks}, Marks finds 3-regular graphs with $\o_\mu(G)\leq2$ which admit no sourceless orientations, or equivalently with $\o_B(G)=3$. And the Luzin-Novikov uniformization theorem says that if $o_B(G)\leq \aleph_0$, then it is equal to the number of (not necessarily injective) functions needed to generate $G$. This is closely related to the parameter $k_{\mathcal B}$ studied by Cs\'oka, Lippner, and Pikhurko~\cite{clp}, and we can rephrase a long-standing open problem as follows:

\begin{prb}{\cite[Problem 4.8]{KM16}} If $\o_B(G)$ and $\chi_B(G)$ are finite, does it follow that
\[\chi_B(G)\leq 2\o_B(G)+1?\]
\end{prb}

Further, orientation numbers provide an interesting test case for adapting methods from finite combinatorics. In the classical setting, $\o(G)$ is well understood in terms of partitions of $G$ into sidewalks.\footnote{A sidewalk is graph where each component has at most 1 cycle, i.e.~where bicycles are not allowed. These are sometimes called psuedoforests.}

\begin{prop}
For any graph $G$ on any set $X$,
\[
    \o(G) =\min\big\{|S|: S\subseteq \s P(G),\; \bigcup S=G,\mbox{ and  }(\forall s\in S)\; s\mbox{ is a sidewalk}\big\}
\]
and if $\o(G)$ is finite,
\[\o(G)=\max_{S\subseteq X\mbox{ finite}} \left\lceil\frac{|S|}{\rho(S)}\right\rceil\] where $\rho(S)=|S|-\#\mbox{acyclic components of }G\res S$.
\end{prop}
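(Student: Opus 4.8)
The plan is to prove the two identities separately: the first holds for an arbitrary cardinal $\o(G)$, while the second uses finiteness. Both parts rest on the elementary observation that a graph is $1$-orientable precisely when it is a sidewalk. For the forward direction, if $o$ orients a graph $H$ with every outdegree at most $1$, then any finite subgraph $F$ has at most $\sum_{x\in V(F)}\out_o(x)\le|V(F)|$ edges, so no component of $H$ can carry two independent cycles; for the converse, root each acyclic component and orient its edges toward the root, and orient each unicyclic component cyclically with the pendant trees directed toward the cycle. I would record this as a preliminary claim.

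For the first identity I would prove the two inequalities by direct constructions. Given a $k$-orientation $o$, at each vertex $x$ fix an injection of the (at most $k$) edges leaving $x$ into $k$, and let $P_i$ collect the edges whose tail gives them color $i$; each $P_i$ has all $o$-outdegrees at most $1$, hence is a sidewalk, and the $P_i$ together cover $G$, so $G$ is a union of at most $k$ sidewalks. Conversely, given a sidewalk cover $S$, put an orientation with outdegree at most $1$ on each member and orient each edge of $G$ according to some member of $S$ containing it; for a fixed $x$, sending an out-edge of $x$ to the sidewalk that oriented it is injective (two out-edges sent to the same sidewalk would force outdegree at least $2$ there), so every outdegree is at most $|S|$. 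Taking the corresponding extrema gives $\o(G)=\min\{|S| : S \text{ a sidewalk cover of } G\}$, and the argument is insensitive to whether $k$ is infinite.

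Now assume $\o(G)=k$ is finite. The lower bound in the second formula is short: restrict an optimal orientation to $G\res S$ and run the coloring construction above to cover the edges of $G\res S$ by at most $k$ spanning sub-pseudoforests; each such sub-pseudoforest has exactly $|S|$ minus its number of acyclic components many edges, and deleting edges can only increase that count, so it has at most $\rho(S)$ edges --- hence $G\res S$ has at most $k\,\rho(S)$ edges, i.e. $\lceil|E(G\res S)|/\rho(S)\rceil\le k$ for every finite $S$. The reverse inequality is essentially Hakimi's theorem and is the heart of the matter: I would show that $|E(G\res S)|\le k\,\rho(S)\le k|S|$ for all finite $S$ already forces $\o(G)\le k$. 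An orientation is exactly a choice of one endpoint for each edge so that each vertex is chosen at most $k$ times, i.e. an $E(G)$-saturating matching in the bipartite incidence graph with every vertex of $G$ replaced by $k$ copies; the relevant deficiency condition is $|F|\le k|V(F)|$ for all finite edge sets $F$, which follows from the hypothesis applied to $S=V(F)$. Finite Hall then $k$-orients every finite subgraph, and a routine compactness argument over the (compact) space of all orientations of $G$ upgrades this to a global $k$-orientation. Finally the maximum is attained, with value $k$: otherwise $\lceil|E(G\res S)|/\rho(S)\rceil\le k-1$ for every finite $S$, and the same Hall/compactness reasoning would produce a $(k-1)$-orientation, contradicting minimality of $\o(G)$.

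The step I expect to be the real obstacle is this sufficiency half of the second identity: pinning down the correct deficiency inequality for the matching and, in the infinite case, the compactness upgrade from finite $k$-orientations to a global one. Everything else is bookkeeping around the sidewalk / $1$-orientability dictionary.
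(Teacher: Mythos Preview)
Your argument is correct, and for the first identity it is exactly what the paper has in mind when it calls this ``elementary'': you spell out the dictionary between $1$-orientations and sidewalks, then split a $k$-orientation into $k$ functional pieces and, conversely, superpose $|S|$ many $1$-orientations.

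For the second identity your route genuinely diverges from the paper's. The paper observes that the edge sets of sidewalks are precisely the independent sets of the bicircular matroid on $G$, so the minimum number of sidewalks covering $E(G)$ is given by the Edmonds matroid covering formula $\max_A\lceil|A|/r(A)\rceil$; restricting $A$ to edge sets of the form $E(G\res S)$ and recognizing $r(E(G\res S))=\rho(S)$ gives the displayed maximum, and compactness passes from finite $G$ to infinite $G$. You instead prove Hakimi's theorem directly: a $k$-orientation is an $E(G)$-saturating matching in the bipartite graph between edges and $k$-fold blown-up vertices, whose Hall condition is exactly $|F|\le k|V(F)|$, which follows from the hypothesis since $\rho(S)\le|S|$. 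This is more elementary and self-contained---it avoids importing the matroid covering machinery---while the paper's approach is shorter and explains \emph{why} the quantity $\rho(S)$ is the right denominator (it is a matroid rank). Your lower bound, bounding each sidewalk in a cover of $G\res S$ by $\rho(S)$ edges via the inequality $a_i\ge a$ on acyclic-component counts, is a clean direct substitute for the trivial direction of Edmonds. Both approaches need the same compactness step at the end, and you handle it correctly.
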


The first statement is elementary. For the second, observe that the sidewalks in a graph are the independent sets of a matroid (usually called the bicircular matroid). The result then follows from compactness and the Edmonds covering theorem {\cite[Theorem 5.3.2]{matroids}}. Curiously, this characterization in terms of sidewalk covering fails in definable contexts, but we can still recover a measurable version of the Edmonds formula.

\subsection{Statement of Results}

 For a graph $G$, we say a measure $\mu$ is $G$-invariant if $G$ is a countable union of $\mu$-preserving involutions. If $\mu$ is a $G$-invariant probability measure, we say $G$ is probability measure preserving (or p.m.p.). The main result of this paper is the following upper bound for $\o_\mu$ on bounded degree p.m.p. graphs.
 \begin{thm}[See Corollary \ref{thm:nashwilliams}]\label{thm:nashwilliamsintro} If $G$ is a \pmp graph with bounded degree, and $k$ is an integer with $k>\cost(G\res A)$ for $\mu(A)>0$, then $\o_\mu(G)\leq k$.
 \end{thm}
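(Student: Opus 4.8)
The plan is to reduce the measurable orientation problem to a statement about decomposing the graph into countably many Borel functions, i.e.\ into countably many ``sidewalk-like'' pieces, in a way that respects the measure. The key classical input is that an orientation with outdegree at most $k$ is essentially the same data as a covering of the edge set by $k$ sidewalks (the first clause of the Proposition above): given an orientation, the set of edges emanating from each vertex can be split so that each class forms a sidewalk, and conversely a sidewalk always $1$-orients. So it suffices to show that, after discarding a null set, $G\res A$ can be written as a union of $k$ Borel sidewalks. Since $k$ is an integer strictly above $\cost(G\res A)$ for every positive-measure $A$, the natural approach is to exploit the definition of cost: $G$ is generated by a countable graphing $\Phi=\{\varphi_n\}$ with $\sum_n \mu(\dom\varphi_n)$ close to (in particular below $k$ on every piece), and then to \emph{merge} these partial isomorphisms into $k$ global sidewalks.

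First I would recall/establish a measurable Nash--Williams / Edmonds type statement: a bounded-degree p.m.p.\ graph whose cost restricted to every positive-measure set is $<k$ decomposes (off a null set) into $k$ Borel subgraphs each of which is a sidewalk (each component has at most one cycle). This is the analogue of the classical bicircular-matroid covering theorem, and it is plausibly the technical heart of the paper's earlier sections (the excerpt says ``we can still recover a measurable version of the Edmonds formula''), so I would invoke it. Second, from such a decomposition $G\res A=\bigcup_{i<k} s_i$ with each $s_i$ a Borel sidewalk, I orient each $s_i$ with outdegree $\le 1$: in each tree component pick a Borel orientation toward a chosen end or just orient acyclically (possible Borel since a Borel forest is $1$-orientable, as noted after the Definition: acyclic graphs have $\o=1$ and the construction is Borel), and in each unicyclic component orient the unique cycle cyclically and the attached trees toward the cycle. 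The resulting orientation $o$ of $G\res A$ satisfies $\out_o(x)\le \sum_{i<k}\out_{s_i}(x)\le k$, giving $\o_\mu(G)\le k$.

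The step I expect to be the genuine obstacle is the measurable sidewalk decomposition itself --- turning the cost bound into an honest Borel partition into $k$ sidewalks. Classically this uses matroid-union/augmenting-path arguments that are not obviously implementable Borel-measurably or even measurably; the standard fixes go through the structure theory of cost (treeings, the relation between cost and the minimal number of generators, and Gaboriau-type arguments), and one typically has to spend a little to correct on a null set. In particular one must be careful that ``$\cost(G\res A)<k$ for all positive-measure $A$'' (a uniform/local hypothesis, stronger than $\cost(G)<k$) is exactly what lets the augmentation run without getting stuck on a non-null bad set; handling that localization correctly, and making each augmentation step Borel, is where the real work lies. The orientation-from-sidewalks step and the bookkeeping on $\out$ are then routine.
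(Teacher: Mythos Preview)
Your proposal has a genuine gap at the very step you flag as ``routine.'' The implication ``Borel sidewalk $\Rightarrow$ measurably $1$-orientable'' is false. The paper notes immediately after the definition that an undirectable forest of lines has $\o_B(G)=2$, and Proposition~\ref{prop:z2} shows that the shift graph of $(\Z/2\Z)^{*2}$---whose components are bi-infinite paths, hence an acyclic sidewalk---has $\o_\mu(G)=2$, not $1$. Your parenthetical ``acyclic graphs have $\o=1$ and the construction is Borel'' misreads the introduction: $\o(G)=1$ is the \emph{classical} statement, and the very next sentence gives the counterexample in the Borel setting. This is precisely what the introduction means by ``this characterization in terms of sidewalk covering fails in definable contexts,'' and the final theorem of Section~3 exhibits sidewalks with $\o_B$ equal to any prescribed $n$. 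So even granting a measurable decomposition into $k$ sidewalks, you cannot conclude $\o_\mu(G)\le k$.

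The paper's argument is entirely different and never mentions sidewalks. It works directly with orientations via an augmenting-path scheme in the spirit of Elek--Lippner. For an orientation $o$, set $O_o=\{x:\out_o(x)>k\}$ and $I_o=\{x:\out_o(x)<k\}$; an \emph{augmenting chain} is an $o$-oriented path from $O_o$ to $I_o$. Flipping every edge along such a chain lowers the outdegree at its start by one and raises it at its end by one, touching no other outdegrees. Using a Borel countable coloring of the intersection graph on paths of length $\le n$, one flips chains in waves to produce $o'$ with no augmenting chains of length $<n$, and the measure of flipped edges is controlled by $\m(O_o)$. The cost hypothesis enters here, not earlier: if every point of $A$ has outdegree $\ge k$, then counting edges leaving $A$ against $\cost(G\res B^+(A))$ gives $\m(B^+(A))\ge (k/\alpha_\mu)\,\m(A)$, so iterating yields $\m(O_o)\le(\alpha_\mu/k)^n$ once no chains of length $\le n$ remain. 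Since $k>\alpha_\mu$, the measures of edges flipped at successive stages are summable; Borel--Cantelli gives a.e.\ convergence, and the limiting orientation has $\m(O_o)=0$. The local cost bound is used to force exponential growth of positive neighborhoods of the bad set, not to manufacture a sidewalk decomposition.
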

 
In many cases, this upper bound turns out to be optimal, and we also have a sharper analysis for expansive graphs (see Theorem \ref{thm:expansive}).

A measure is $G$-quasi-invariant if it admits a Radon--Nikodym cocycle. That is, a Borel function $\rho:G\rightarrow \R^+$ such that, for any Borel partial injection $f$ contained in $G$ and any Borel $A\subseteq V$, $\mu(f[A])=\int_{x\in A}\rho\left(x,f(x)\right)d\mu$. Details can be found in \cite{KM}. Our proof of Theorem \ref{thm:nashwilliamsintro} is robust in the sense that it gives a bound on $\o_\mu$ for $\mu$ quasi-invariant in terms of the essential supremum of the Radon--Nikodym cocycle (see Theorem \ref{thm:nashwilliams}). As in Conley--Tamuz \cite{ClintonsTrick}, this yields a Borel result for graphs of subexponential growth:

\begin{thm}[See Corollary \ref{cor:borel}]
If $G$ is a $d$-regular Borel graph with subexponential growth, then $\o_B(G)\in [\frac{d}2,\frac{d}2+1]$.
\end{thm}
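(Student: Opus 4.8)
The plan is to deduce this Borel result from the measurable result (Theorem~\ref{thm:nashwilliamsintro}, in its quasi-invariant sharpening Theorem~\ref{thm:nashwilliams}) by the Conley--Tamuz technique of building a ``local'' auxiliary measure along which one can run the measurable argument and then reading off a genuinely Borel conclusion. Concretely, I would first recall that a $d$-regular graph $G$ has \emph{subexponential growth} if for every vertex $x$ the ball sizes $|B_G(x,n)|$ grow subexponentially in $n$; equivalently, for the right base, the cocycle one gets by comparing a vertex to its neighbors can be made arbitrarily close to $1$. Since $G$ is $d$-regular it is in particular locally finite and bounded degree, so the acyclic/sidewalk machinery behind the classical Edmonds formula applies componentwise, and the trivial bound $\o(G)=\lceil d/2\rceil$ already gives the upper endpoint $\frac d2+1$ pointwise (hence Borel-trivially when $d$ is odd); the content is the lower endpoint, i.e.\ producing a Borel $k$-orientation with $k=\lceil d/2\rceil$, which for $d$ even means $\o_B(G)\le d/2$.

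The key steps, in order. (1) Fix a vertex $x_0$ in each component (Borel-uniformly, via a Borel transversal of the connectedness equivalence relation $E_G$, which exists since $E_G$ has locally finite, hence countable, classes). (2) Define a cocycle $\rho\colon G\to\R^+$ by setting $\rho(x,y)$ to reflect the ratio of ``weights'' assigned to $x$ and $y$, where the weight of a vertex decays geometrically with its $G$-distance to the basepoint $x_0$ of its component at a rate $\lambda>1$ chosen close to $1$; subexponential growth guarantees that the induced (infinite, $\sigma$-finite) measure $\mu$ on $X$ obtained by summing these weights over each component is finite on each component up to normalization, so after normalizing componentwise we obtain a $G$-quasi-invariant probability measure whose Radon--Nikodym cocycle is bounded by $\lambda^{\pm1}$, with $\lambda$ as close to $1$ as we like. (3) Apply Theorem~\ref{thm:nashwilliams}: since $\sup\rho\to 1$, the resulting bound on $\o_\mu(G)$ becomes $\lceil d/2\rceil$ once $\lambda$ is close enough to $1$ (one should check that the cost term $\cost(G\res A)$ for a $d$-regular graph contributes $d/2$, matching the Nash--Williams/Edmonds value, so $k>\cost$ is satisfied by $k=\lceil d/2\rceil$ exactly when the cocycle correction is small). (4) Finally, and this is where Conley--Tamuz is essential, observe that the orientation produced by the measurable theorem, applied to \emph{this particular} canonically-defined quasi-invariant measure, is actually defined off a $\mu$-null set, but because $\mu$ is equivalent (mutually absolutely continuous) to \emph{counting-like} measure on each component --- every vertex has positive weight --- a $\mu$-null set is in fact empty; hence the orientation is total and Borel, giving $\o_B(G)\le \lceil d/2\rceil$.

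I expect the main obstacle to be step~(2) together with the ``null set is empty'' punchline of step~(4): one must arrange the weighting so that (a) the cocycle is genuinely bounded near $1$ \emph{uniformly in the vertex}, which is exactly what subexponential growth buys (exponential growth would force the cocycle away from $1$), (b) the per-component normalization is Borel --- this needs the series $\sum_n |B_G(x_0,n)|\lambda^{-n}$ to converge, again from subexponential growth, and to depend on the component in a Borel way --- and (c) that the measure charges every point, so that the measurable orientation, which a priori omits a null set, omits nothing. A subtlety worth isolating: Theorem~\ref{thm:nashwilliams} gives a Borel orientation of $G\res A$ for $A$ co-null; to promote this to all of $X$ one either argues directly that the construction never discards vertices when $\mu$ is fully supported, or iterates the construction transfinitely along the (countable) complexity of the discarded pieces, as in \cite{ClintonsTrick}. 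Once these points are in place, combining the lower bound $\o_B(G)\ge\lceil d/2\rceil$ (immediate, since $\o_B\ge\o\ge\lceil d/2\rceil$ for $d$-regular $G$ by the Edmonds formula applied to a single finite cycle-rich subgraph) with the upper bound just obtained yields $\o_B(G)\in[\tfrac d2,\tfrac d2+1]$, as claimed.
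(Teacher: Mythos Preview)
Your step~(1) contains a fatal error: you claim $E_G$ admits a Borel transversal because its classes are countable, but this is false in general. Admitting a Borel transversal is exactly the definition of smoothness, and the connectedness relation of a Borel graph is typically non-smooth (e.g.\ for Cayley graphs of free actions of infinite groups). Without a Borel choice of basepoint in each component you cannot assemble your per-component weights into a single global Borel probability measure, so the rest of the construction never gets off the ground.

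The paper avoids this by exploiting the quantifier structure of Theorem~\ref{thm:nashwilliams}: the Borel orientation $o$ there is constructed \emph{once}, independently of any measure, and then for \emph{every} $\mu$ with $k>\rho^2\alpha_\mu(G)$ the set $\{x:\out_o(x)>k\}$ is $\mu$-null. Hence no global measure is needed. For each vertex $x$ individually, subexponential growth lets one put an atomic probability measure $\mu_x$ on the component of $x$ (with $x$ itself as basepoint) whose cocycle is as close to~$1$ as desired; since $x$ is an atom of $\mu_x$, nullity forces $\out_o(x)\le k$. The lower bound comes the same way via Proposition~\ref{prop:cost}, not from the classical $\o(G)$ as you suggest---your Edmonds argument is not immediate, since for instance acyclic $d$-regular graphs have $\o(G)=1$. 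Finally, note you are trying to prove more than the theorem states: it only claims $\o_B(G)\le d/2+1$, not $\o_B(G)\le\lceil d/2\rceil$; the strict inequality $k>\rho^2\alpha_\mu(G)$ in Theorem~\ref{thm:nashwilliams} means that even with $\rho\to 1$ one only obtains $k>d/2$, hence $k=\lfloor d/2\rfloor+1$.
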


 In the third section, we apply these results to compute the measurable orientation numbers of Schreier graphs of \pmp group actions in many cases. Using determinacy results, we get a lower bound for the Borel orientation number of a $(\Z/2\Z)^{*n}$ action which is strictly larger than its measurable orientation number (see Theorem \ref{thm:determinacy}). And we also find interesting examples of graphs with uncountable Borel orientation number, including the unit distance graph in $\R^2$ (see Theorem \ref{thm:distancegraphs}).

In the last section, we bound the projective complexity of orientation problems. We show that the set of codes for locally countable Borel $1$-orientable graphs is $\bd12$ using a dichotomy theorem of Hjorth and Miller (see Theorem \ref{thm:1comp}), and the set of codes for Borel $\aleph_0$-orientable graphs is $\bp11$ using a Gandy--Harrington forcing argument (see Corollary \ref{thm:Ncomp}). We also show that the set of closed equivalence relations which admit a $1$-orientable graphing is $\bs12$ complete (see Theorem \ref{thm:Selcomp}). By a theorem of Hjorth, this is equivalent to showing that equivalence relations which admit a selector is $\bs12$ complete, an interesting statement in its own right.

\subsection{Acknowledgements} Thank you to Tyler Arrant, Anton Bernsteyn, Clinton Conley, Alexander Kechris, Andrew Marks, Alexander Mennin, and Oleg Pikhurko for helpful comments and conversations. Thanks to the reviewer for helpful suggestions. This material is based upon work supported  by the National Science Foundation under Grant No.~DMS-1764174.

\subsection{Notation} We end this introduction by settling some notation. We view a graph $G$ on a set $X$ as symmetric subset of $X^2$. To emphasize that $G$ is symmetric, we will often write $\{x,y\}\in G$ if $(x,y)\in G$. For $A\subseteq X$, $(G\res A)$ is the graph on $A$ defined by $(G\res A):=G\cap A\times A.$ An orientation of $G$ is relation $o\subseteq X^2$ so that $(x,y)$ and $(y,x)$ are never both in $o$ and $\{x,y\}\in G$ if and only if $(x,y)\in o$ or $(y,x)\in o$.

A path in $G$ is sequence of vertices $p=(x_0,...,x_n)$ with $\{x_i,x_{i+1}\}\in G$ for $i=0,...,n-1.$ (Some authors refer to such a sequence as a walk and reserve the name path for what we would call a simple path). We say that $p$ is a path from $x_0$ to $x_n$. The length of the path $p$ is $n$, the number of edges it crosses. Given an orientation $o$, We say $p$ is an oriented path if $(x_i,x_{i+1})\in o$ for $i=0,...,n-1$. The radius $n$ positive neighborhood of a vertex is 
\[B^+_{n}(x):=\{y:\mbox{ there is an oriented path from $x$ to $y$ of length at most }n\}\] and the negative neighborhood is \[B^-_{n}(x):=\{y:\mbox{ there is an oriented path from $y$ to $x$ of length at most }n\}.\] And, for sets $A\subseteq X$ we define
\[B^+_n(A)=\bigcup_{x\in A} B^+_n(x),\quad \mbox{and } B^-_n(A)=\bigcup_{x\in A} B^-_n(x).\] We abbreviate $B^+_1(A)=B^+(A).$

For any relation $R$, the section of $R$ at $x$ is $R_x=\{y: (x,y)\in R\}$. For any vertex $x$, the degree of $x$ is $\deg(x)=|G_x|$. The outdegree of $x$ with respect to $o$ is $\out_o(x)=|o_x|$. The indegree of $x$ is $\ind_o(x)=\deg(x)-\out_o(x)$. If $G$ is a graph on a measure space $(X,\mu)$, then the cost of $G$ is half of the average degree of $G$:
\[\operatorname{cost}(G)=\frac{1}{2}\int_{x\in X} \deg(x)\;d\mu.\]
For $\mu$ $G$-quasi-invariant, we abuse notation and use $\mu$ also for the associated measure on $G$: for $S\subseteq G$
\[\m(S):=\int_{x\in X} |S_x|\;d\mu.\]
The expansion constant of a graph is
\[\lambda_\mu(G)=\inf\left\{\frac{1}{\m(A)} \int_{x\in A} |G_x\sm A|\;d\mu: 0<\mu(A)\leq \frac{1}{2}\right\}.\] The quantity minimized by $\lambda_\mu$ is the measure theoretic analog of the ratio of edges leaving $A$ to vertices in $A$. We call a graph expansive if $\lambda_\mu(G)>0$.

We will sometimes suppress $\mu$ and $o$ in notation. We will also conflate $o$ with its characteristic function to speak of $o(e)$ and $\lim_i o_i(e)$ for sequences of orientations.


\section{Bounds via Measurable Combinatorics}

For this section, $G$ will be a locally countable Borel graph, the letter $\mu$ will always stand for a $G$-quasi-invariant Borel probability measure on $X$, $\rho(x,y)$ stands for the associated Radon--Nikodym cocycle, and $\rho:=\essup_{x,y} \rho(x,y)$. In particular $\mu$ is $G$-invariant if and only if $\rho=1$. We include a proof of the following basic proposition to give a flavor of how $\rho$ appears in arguments.

\begin{prop}
For any $\mu$, orientation $o$, 
\[\int_{x\in X} \ind_o(x)\;d\mu \leq \rho\int_{x\in X} \out_o(x)\;d\mu.\]
\end{prop}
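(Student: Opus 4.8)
The plan is to push everything to the level of the measure $\mu$ on the edge set $G$ and then feed it into the Radon--Nikodym cocycle via a Luzin--Novikov decomposition. Write $o_A := o\cap(A\times A)$. Since $o\subseteq G$ and $A$ is $G$-invariant, every edge incident to a vertex of $A$ lies inside $A$; hence $\int_{x\in A}\out_o(x)\,d\mu=\mu(o_A)$ and $\int_{x\in A}\deg(x)\,d\mu=\m(G\res A)$. Because $o$ is an orientation of $G$ we have $G\res A=o_A\cup o_A^{-1}$, and since $o_A\sqcup(o_A^{-1}\sm o_A)$ is a partition of this union, additivity of $\m$ on subsets of $G$ gives
\[
\int_{x\in A}\ind_o(x)\,d\mu=\m(G\res A)-\mu(o_A)=\m\!\left(o_A^{-1}\sm o_A\right)\leq \m\!\left(o_A^{-1}\right).
\]
So it suffices to prove $\m(o_A^{-1})\leq\rho\,\mu(o_A)$.

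The second step is the general lemma that for every Borel $S\subseteq G$,
\[
\m\!\left(S^{-1}\right)=\int_{(x,y)\in S}\rho(x,y)\,d\mu\leq \rho\,\m(S).
\]
Since $G$ is locally countable, Luzin--Novikov lets me write $S$ as a countable disjoint union of graphs of Borel partial injections $f_n\subseteq G$. Then $\m(S)=\sum_n\mu(\dom f_n)$ and $\m(S^{-1})=\sum_n\mu(\rng f_n)=\sum_n\mu\!\left(f_n[\dom f_n]\right)$, and applying the defining property of the cocycle to each $f_n$ with the set $\dom f_n$ yields $\mu(f_n[\dom f_n])=\int_{x\in\dom f_n}\rho(x,f_n(x))\,d\mu\leq\rho\,\mu(\dom f_n)$. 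Summing over $n$ proves the lemma.

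Finally I apply the lemma with $S=o_A$ to get $\m(o_A^{-1})\leq\rho\,\mu(o_A)=\rho\int_{x\in A}\out_o(x)\,d\mu$, which chains with the first display to give the claimed inequality.

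The only real work is the bookkeeping: checking that the Luzin--Novikov pieces may be taken pairwise disjoint, that $\m$ restricted to $G$ is $\sigma$-finite enough (which holds for locally countable $G$) to justify the countable additivity and the interchange of sum and integral, and keeping track of the mildly nonstandard notion of orientation, in which $o$ may contain both $(x,y)$ and $(y,x)$ --- this is exactly what produces the ``$\sm o_A$'' term above and is harmless because we only need an upper bound. I expect the cocycle computation in the lemma to be the conceptual heart, and the $G$-invariance of $A$ is used precisely to ensure $G\res A=o_A\cup o_A^{-1}$ with no edges escaping $A$.
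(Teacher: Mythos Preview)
Your proof is correct and follows essentially the same approach as the paper: both use a Luzin--Novikov decomposition of $o$ into Borel partial injections and apply the defining property of the Radon--Nikodym cocycle to each piece. The paper does this in one pass (writing $\out_o(x)=\sum_i\mathbf{1}_{\dom f_i}(x)$ and bounding $\sum_i\mu(\im f_i)$), whereas you package the cocycle step as the standalone lemma $\m(S^{-1})\leq\rho\,\m(S)$ and then reduce the proposition to it via $\int_A\ind_o\,d\mu\leq\m(o_A^{-1})$; the content is the same.
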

\begin{proof}
Fix a set of involutions generating $o$, $\{f_i: i\in\N\}$. We may choose these functions so that, for any $x\in X$,
\[\out_o(x)=\sum_{i\in\N} \mathbf{1}_{\dom(f_i)}(x) \quad \mbox{and }\ind_o(x)=\sum_{i\in\N} \mathbf{1}_{\im(f_i)}(x).\] Then since each $f_i$ is injective,
\begin{align*}
    \int_{x\in A} \ind_o(x) \;d\mu &= \sum_{i\in\N} \m(\im(f_i)) \\
    \; & =\sum_{i\in\N} \int_{x\in \dom(f_i)} \rho(x,f_i(x)) \;d\mu \\
    \; & \leq \int_{x\in A} \rho \sum_{i\in\N} \mathbf{1}_{\dom(f_i)}(x) \;d\mu \\
    \; & = \rho \int_{x\in A} \out_o(x) \;d\mu.
\end{align*}
\end{proof}
We use similar propositions throughout this section without comment. Readers unfamiliar with quasi-invariant measures can consult \cite{KM}, or consider the \pmp case where $\rho=1$ to get a gist of the arguments.

\begin{dfn} 
Let $\alpha_\mu(G)=\sup\{\cost(G\res A): A\subseteq X, \m(A)>0 \},$ where the cost of a restriction is computed with respect to the normalized measure $\mu/\mu(A).$ That is,
\[\cost(G\res A)=\frac{1}{2\mu(A)}\int_{x\in A} |G_x\cap A| \;d\mu.\]
\end{dfn}

For example, if $G$ is $d$-regular, $\alpha_\mu(G)=\frac{d}2.$ And, if $G$ is a finite graph equipped with counting measure, this is essentially the Edmonds formula for $\o$.

\begin{prop}\label{prop:cost} For any $\mu$,
\[\o_\mu(G)\geq \left\lceil\frac{2\alpha_\mu(G)}{1+\rho}\right\rceil.\]
In particular, if $\mu$ is $G$-invariant,
$\o_\mu(G)\geq \left\lceil \alpha_\mu(G)\right\rceil$.
\end{prop}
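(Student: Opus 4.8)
The plan is to bound the indegree integral below by comparing it to the outdegree integral in two ways and combining with the cost lower bound. Fix an orientation $o$ witnessing $\o_\mu(G) = k$ (or work toward a contradiction assuming $k < \lceil 2\alpha_\mu(G)/(1+\rho)\rceil$), after possibly passing to a conull $G$-invariant set. For any $G$-invariant set $A$ with $\mu(A) > 0$, the total edge count inside $A$ decomposes as $\int_{x\in A}|G_x \cap A|\, d\mu = \int_{x\in A}\out_o(x)\,d\mu + \int_{x\in A}\ind_o(x)\,d\mu$, since every edge with both endpoints in $A$ is oriented one way and hence contributes once to an outdegree and once to an indegree among vertices of $A$. (Here I use that $A$ is $G$-invariant so that $G_x \cap A = G_x$ for $x \in A$, or more carefully restrict attention to $G \res A$.) By definition of cost, the left side is $2\mu(A)\cost(G\res A)$.

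Next I apply the Proposition proved just above (the $\ind$--$\out$ inequality) to the $G$-invariant set $A$: $\int_{x\in A}\ind_o(x)\,d\mu \le \rho \int_{x\in A}\out_o(x)\,d\mu$. Combining with the edge decomposition gives
\[
2\mu(A)\cost(G\res A) \le (1+\rho)\int_{x\in A}\out_o(x)\,d\mu \le (1+\rho)\,\o_\mu(G)\,\mu(A),
\]
using $\out_o(x) \le \o_\mu(G)$ pointwise on the set where $o$ attains the minimum. Dividing by $\mu(A)$ yields $\cost(G\res A) \le (1+\rho)\o_\mu(G)/2$ for all $G$-invariant $A$ of positive measure. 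Since $\o_\mu(G)$ is an integer, $\o_\mu(G) \ge \lceil 2\cost(G\res A)/(1+\rho)\rceil$, and taking the supremum over such $A$ almost gives the claim.

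The main obstacle is that $\alpha_\mu(G)$ is defined as a supremum over all positive-measure sets $A$, not merely the $G$-invariant ones, whereas the $\ind$--$\out$ proposition is stated for $G$-invariant $A$. To bridge this, I would argue that the supremum of $\cost(G\res A)$ over arbitrary positive-measure $A$ equals (or is controlled by) the supremum over $G$-invariant ones: given an arbitrary $A$, one can restrict to its $G$-saturation $[A]$ and run the orientation argument there, or observe that $\cost(G\res A)$ for $A$ inside a fixed invariant piece is bounded by the cost of that invariant piece since cost is monotone under passing to larger subsets within an invariant component — more precisely, one uses that for any $A$ there is a conull invariant set on which $o$ is defined, and applies the decomposition to $A$ directly while bounding $\int_{x\in A}\ind_o(x)$ by an integral over the full saturation. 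The cleanest route is probably to first reduce to the case where $o$ is defined on all of $X$ (Borel orientation of $G\res X'$ for conull invariant $X'$), then note that for any $A \subseteq X'$ the edges of $G\res A$ contribute to $\out$ and $\ind$ within $A$, and handle the boundary terms via the same cocycle estimate applied to partial injections rather than full involutions. The ceiling and the strictness in the definition of $\lambda_\mu$ versus the non-strict inequalities require a little care but are routine. The "in particular" clause is immediate by setting $\rho = 1$.
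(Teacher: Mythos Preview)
Your core computation is exactly the paper's: decompose the edge count as out plus in, bound in by $\rho$ times out, and conclude $\cost(G)\le \frac{1+\rho}{2}\o_\mu(G)$. The difference is only in how you handle the passage from $\cost(G)$ to $\alpha_\mu(G)$, and here you create an obstacle that the paper dissolves in one sentence.

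The paper simply observes that $\o_\mu(G\res A)\le \o_\mu(G)$ for every positive-measure $A$ (restricting an orientation to $A$ can only lower outdegrees), so it suffices to prove $\o_\mu(G)\ge \frac{2}{1+\rho}\cost(G)$ for the full graph and then apply that inequality to each $G\res A$ separately. There is no need for $A$ to be $G$-invariant: when you pass to $G\res A$ with the normalized measure $\mu/\mu(A)$, the Radon--Nikodym cocycle restricts and its supremum does not increase, and the whole space $A$ is trivially $(G\res A)$-invariant, so the $\ind$--$\out$ proposition applies directly. Your attempts to bridge the gap by saturating $A$ or invoking monotonicity of cost are therefore unnecessary, and one of them is actually false: $\cost(G\res A)$ is \emph{not} monotone under enlarging $A$ inside an invariant set (for instance, a $d$-regular graph restricted to a dense half can have cost $d/2$, while the whole graph with an added isolated half has cost $d/4$).

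One minor correction: your edge decomposition should be an inequality $\int_A|G_x\cap A|\,d\mu \le \int_A\out_o(x)\,d\mu+\int_A\ind_o(x)\,d\mu$, not an equality, since the paper's orientations may contain both $(x,y)$ and $(y,x)$. This does not affect the argument.
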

\begin{proof}
Since $\o_\mu(G\res A)\leq \o_\mu(G)$ when $\m(A)>0$, it suffices to show \[\o_\mu(G)\geq \frac{2}{1+\rho}\cost(G).\]

Suppose $G$ has an orientation $o$ with outdegree bounded by $n$. Then,
\begin{align*}\int_X |G_x|\; d\mu &  \leq \int_X \out_o(x)\; d\mu+\int_X \ind_o(x) \;d\mu. \\
\; & \leq (1+\rho) \int_X \out_o(X)\; d\mu \end{align*}
so
\[\operatorname{cost}(G)=\frac{1}{2}\int_X |G_x|\; d\mu \leq \frac{1+\rho}{2}\int_X \out_o(x)\; d\mu\leq \frac{1+\rho}{2}n.\]
\end{proof}

The next theorem says that, for bounded degree graphs with $\rho$ small this lower bound is close to sharp.

\begin{thm}\label{thm:nashwilliams} If $G$ has bounded degree, then for any $k\in\N$, there is a Borel orientation $o$ such that, for any measure $\mu$, if $k>\rho^2\alpha_\mu(G)$, then
\[\mu(\{x: \out_o(x)>k\})=0.\]

\end{thm}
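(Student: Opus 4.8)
The plan is to build the orientation $o$ as a limit of finite-stage orientations, using an augmenting-path argument adapted from the classical proof of the Nash-Williams / Hakimi orientation theorem, but carried out Borel-measurably. I would start from an arbitrary Borel orientation $o_0$ of $G$ — this exists since $G$ is locally countable, e.g.\ pick a Borel linear order on $X$ and orient each edge from smaller to larger, or use a Borel proper edge coloring and orient within each color class. The key local move is: if $\out(x) > k$ and there is an oriented path $x = x_0, x_1, \dots, x_m = y$ with $\out(y)$ much smaller, reverse all the edges along the path; this decreases $\out(x)$ by one, increases $\out(y)$ by one, and leaves all intermediate outdegrees unchanged. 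The obstruction to running this forever on a single vertex is that $y$ must have genuinely small outdegree relative to $x$; the point of the hypothesis $k > \rho^2 \alpha_\mu(G)$ is precisely that, in a region where every vertex has outdegree $> k$, no escape is possible and one derives a contradiction with the cost bound of Proposition \ref{prop:cost} applied to that region (hence that region is null).

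Concretely, I would argue as follows. Call a vertex $x$ \emph{bad} (at stage $i$) if $\out_{o_i}(x) > k$. Let $A_i$ be the set of vertices from which, following $o_i$-oriented paths, one cannot reach any vertex of outdegree $\leq k - 1$ — equivalently, the $o_i$-forward-closure of $\{x : \out(x) > k\}$ intersected appropriately. On $A_i$ every vertex has outdegree $\geq k$, and moreover $A_i$ is forward-closed, so every edge out of $A_i$ goes to a vertex in $A_i$; counting edges this forces $\cost(G \res A_i)$ to be large — at least roughly $\frac{k}{1+\rho}$ after accounting for the Radon-Nikodym distortion of how outdegree relates to edge-measure on the forward-closed set $A_i$ — contradicting $k > \rho^2 \alpha_\mu(G) \geq \rho^2 \cost(G\res A_i)$ once the constants are chased. (Getting the exponent of $\rho$ to come out as $2$ rather than something worse is where the quasi-invariant bookkeeping has to be done carefully, using that $A_i$ is forward-closed and reasoning about both $B^+$ and $B^-$.) Hence $\mu(A_i) = 0$, so outside a null set every bad vertex can see a deficient vertex and the augmenting move is available.

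The measurable/Borel part is the standard ``marriage/augmenting-path'' machinery: one wants to perform infinitely many augmenting moves simultaneously and Borel-measurably, pushing the excess outdegree off to infinity along forward paths. I would do this in stages by choosing, at stage $i$, a maximal Borel ``packing'' of vertex-disjoint augmenting paths from bad vertices to deficient vertices (maximal Borel sets of this kind exist by a standard greedy/exhaustion argument using a Borel edge-coloring or the Luzin–Novikov theorem to enumerate candidate paths), reversing all of them simultaneously, and noting that after discarding a null set this strictly decreases $\mu(\{\out > k\})$ or pushes deficiency closer; then one takes $o = \lim_i o_i$, arguing (as the paper signals it will, via ``$\lim_i o_i(e)$'') that each edge is flipped only finitely often so the limit orientation is well-defined and Borel, and that in the limit $\mu(\{x : \out_o(x) > k\}) = 0$. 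The main obstacle I anticipate is the convergence bookkeeping: ensuring each edge stabilizes, i.e.\ that the augmenting paths at successive stages don't keep re-flipping the same edges forever. The clean way around this is the Conley–Tamuz-style trick referenced in the introduction — work with a well-chosen potential function (such as $\sum_x f(\out_o(x))$ for a convex $f$, suitably integrated against $\mu$) that strictly decreases at each stage by a definite amount, or bound the number of times an edge near a fixed vertex can flip in terms of the distance to the nearest deficient vertex, which is finite off the null set $\bigcup_i A_i$. Once monotonicity of the potential (plus the null-set contradiction above guaranteeing moves remain available) is in hand, the theorem follows.
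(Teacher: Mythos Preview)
Your general framework---start from an arbitrary Borel orientation, flip augmenting paths from high-outdegree to low-outdegree vertices, iterate, take a limit---matches the paper's. Your null-set claim is also essentially right: if $A$ is the forward-closed set of vertices from which no oriented path reaches $I_o=\{\out<k\}$, then every $x\in A$ has $\out_o(x)\geq k$ and $B^+(A)=A$, and the cost/expansion bookkeeping does force $\mu(A)=0$ under the hypothesis. (Your stated inequality $\cost(G\res A)\geq k/(1+\rho)$ is not quite the right form, but the conclusion survives.)

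The genuine gap is convergence. Neither of your proposed fixes works as written. A convex potential $\int f(\out_o(x))\,d\mu$ does decrease when you flip an augmenting path, but the decrease is localized at the two \emph{endpoints}; an edge in the middle of a long path flips without contributing to the drop, so bounding the potential does not bound how many times a fixed edge flips. And ``bound flips near $x$ by the distance to the nearest deficient vertex'' fails because that distance is not monotone under your process---flipping one path can push the nearest deficient vertex arbitrarily far from a neighboring bad vertex. Without a length bound on the paths you flip at each stage, you have no handle on per-edge flip counts.

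The paper supplies the missing quantitative ingredient. At stage $n$ one removes only augmenting chains of length at most $n$ (this is done by coloring the intersection graph on length-$\leq n$ paths and flipping color classes in turn; since each chain can start at most $\Delta-k$ times and each edge lies in only finitely many such chains, this inner loop terminates). The crucial estimate is then an \emph{iterated expansion} bound: if every vertex of $A$ has $\out\geq k$, then $\mu(B^+(A))\geq \tfrac{k}{\rho\alpha}\mu(A)$; iterating, if $o$ has no augmenting chains of length $\leq n$, then $\mu(O_o)\leq(\rho\alpha/k)^n$. Combined with a $\rho^n$ factor coming from measuring flipped edges against the start vertex of their first chain, the measure of edges flipped at stage $n$ is at most $2n\Delta(\rho^2\alpha/k)^n$, which is summable precisely when $k>\rho^2\alpha$; Borel--Cantelli then gives a.e.\ convergence of $o_i(e)$. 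Your forward-closed-set argument is the $n=\infty$ shadow of this expansion estimate, but you need the finite-$n$ exponential decay to make the limit exist.
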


\begin{proof}
The proof is similar to the proof of the Lyons--Nazarov matching theorem in Elek--Lippner \cite{Matching}, but uses a different notion of augmenting chain. 

Let $\Delta$ be a degree bound for G. Given an orientation $o$ of $G$, Define \[O_o:=\{x: \out_o(x)> k\}, \quad \mbox{and } I_o:=\{x:\out_o(x)< k\}.\] We say that a path $(x_0, x_1,...,x_n)$ is an augmenting chain in $o$ if it is an oriented path from $O_o$ to $I_o$, i.e.~if
\begin{enumerate}
    \item $(x_i,x_{i+1})\in o$ for $1\leq i\leq n$, and
    \item $\out_o(x_1)> k$ and $\out_o(x_n)< k$.
\end{enumerate} We say that $o'$ is gotten by flipping an edge $(x,y)$ in $o$ if \[o'(e)=\left\{\begin{array}{ll} 1-o(e) & e=(x,y) \mbox{ or }(y,x)\\ o(e) & \mbox{else} \end{array} \right.\] The key observation is that flipping every edge in an augmenting chain only changes the outdegrees of the endpoints of the chain.

\begin{lem}\label{lem:augment} For any Borel orientation $o$, there is a Borel orientation $o'$ such that 
\begin{enumerate}
\item $o'$ does not admit any augmenting chains of length less than $n$, and 
\item  For any $\mu$, \[\m(\{e:o(e)\not=o'(e)\})\leq 2n \Delta \rho^n \min(\m(O_o),\m(I_o)).\]
\end{enumerate}
\end{lem} 
The $\m(I_o)$ bound will be used in a later theorem.
\begin{proof}[Proof of lemma] Let $\mathcal{C}\subseteq X^{\leq n}$ be the space of (unoriented) paths in $G$ of length at most $n$. By a theorem of Kechris and Miller \cite[Proposition 3.10]{KM16}, there is a countable coloring of the intersection graph on $\mathcal{C}$. Let $\ip{\ell_i:i\in\N}$ enumerate $\N$ with each number repeated infinitely often. Define $o_i$ inductively:
\begin{enumerate}
    \item $o_0=o$
    \item Get $o_{i+1}$ by flipping all paths in the $\ell_i^{th}$ color class which are augmenting chains for $o_{i}$.
\end{enumerate}

 First we check that this process converges on every edge, then we verify that the limiting orientation is as desired.
 
 For any vertex $x$, $\out_{o_i}(x)$ is monotone (this is where we use that $k$ is an integer). And $\out_{o_i}(x)$ drops if and only if $x$ is the starting vertex of a chain flipped at stage $i$. So a given chain can flip at most $\Delta$ many times in this process. Since each edge is contained in only finitely many chains, each edge only flips finitely often. We can then define $o'(e):=\lim_i o_i(e)$.

To see that $(1)$ of the lemma holds, note that if $p$ were an augmenting chain $o'$, it would be an augmenting chain in cofinally many $o_i$. This is because $o_i(e)$ stabilizes in finite time for all $e$. But then $p$ would have been flipped at cofinally many stages, which is absurd. Thus $o'$ does not admit any augmenting chains of length smaller than $n$.

For $(2)$ of lemma, if $o(e)\not=o'(e)$, we can define $f(e)$ to be the first vertex of the first flipped chain containing $e$ (before or after the flip). If $f(x,y)=v$ and $e_1,...,e_d$ are the edges incident to $v$, then one of $(x,y)$ or $(y,x)$ is on the first chain flipped that contains one of the $e_i$'s. So $f$ is at most $2n\Delta$-to-one. Further, $f\inv(x)$ is empty unless $x\in O_o$, and there are at most $n$ edges between $x$ and $f(x,y)$. We compute

\begin{align*} \m(\{e: o(e)\not=o'(e)\}) & = \int_{x\in X} | \{y: o(x,y)\not=o'(x,y)\}|\;d\mu \\
                    \; & \leq \int_{x\in X} \rho^n|f\inv(x)| \;d\mu\\
                    \; & \leq 2n\Delta \rho^n \m(O_o).
\end{align*} Similarly, setting $f(e)$ to be the final vertex of the first flipped chain containing $e$ gives $\m(\{e: o(e)\not=o'(e)\})\leq 2n\Delta\rho^n\m(I_o).$
\end{proof}

Now we want to iterate the construction above to get an orientation with no augmenting chains. To ensure this converges $\mu$-a.e.~for any appropriate $\mu$, we need to analyze $\m(O_o)$. To this end, fix $\mu$ and abbreviate $\alpha=\alpha_\mu(G)$. Recall
\[B^+_n(A):=\{y: \mbox{ there is an oriented path of length at most $n$ from some $x\in A$ to }y\}\] and $B^+(A):=B^+_1(A)$. Notice that $B^+_{n+m}(A)=B^+_n(B^+_m(A))$. 

\vspace{11pt} 

 \emph{Claim 1:} If every point in $A\subseteq G$ has outdegree at least $k$, then 
\[\m\left(B^+(A)\right)\geq \frac k {\rho\alpha} \m(A).\]

Using the facts that every edge coming out of A ends in $B^+(A)$ and that ${\ind(x)+\out(x)}=\deg(x)$ for the graph $G\res B^+(A)$, we have
\begin{align*}k\m(A) & \leq \int_{x\in A} \out_o(x)\; d\mu \\ 
\; & \leq \frac12\left(\int_{x\in A} \out_{o\res B^+(A)}(x) \; d\mu +\rho\int_{x\in B^+(A)} \ind_{o\res B^+(A)}(x)\;d\mu\right)\\
\; & \leq \frac{\rho}{2} \int_{x\in B^+(A)}\left( \out_{o\res B^+(A)}(x)+\ind_{o\res B^+(A)}(x)\right)\;d\mu  \\
\; & \leq \frac\rho2 \int_{x\in B^+(A)} \left|G_x\cap B^+(A)\right|\;d\mu \\
\; & =\rho\cost \left(G\res B^+(A)\right) \m\left(B^+(A)\right) \\ 
\; & \leq \rho \alpha \m\left(B^+(A)\right) .\end{align*} 

It follows from the above claim that $\mu(O_o)$ shrinks exponentially in the length of the smallest augmenting chain in $o$.

\vspace{11pt} 

\emph{Claim 2:} If $o$ admits no augmenting chains of length at most $n$, then $O_o$ has measure at most $\left(\frac {\rho\alpha} {k} \right)^n$.

In this case, any oriented path starting in $O_0$ of length at most $n$ must fail to reach a vertex of outdegree less than $k$. That is, for $i\leq n$, $B^+_i(O_o)$ satisfies the hypotheses of Claim $1$. So,
\[1\geq \m\left(B^+_n(O_0)\right)\geq \left(\frac{k}{\rho\alpha}\right)^n \m(O_0).\]

Now to get the orientation $o$ of the theorem statement, start with any Borel orientation $o_0$ of $G$ and iteratively produce $\ip{o_i: i\in\N}$ with $o_{i+1}=o_i'$ as in Lemma \ref{lem:augment}. Let $o(e)=\lim_i o_i(e)$ if the limit exists, otherwise set $o(e)=o_0(e)$. Note that this construction does not depend on $\mu$. The probability that an edge flips at stage $n$ is at most $2n\Delta\m(O_{o_n})\leq 2n\Delta \frac{(\rho^2\alpha)^n}{k^n}$, which is summable. By Borel-Cantelli $o_i(e)$ converges almost surely. Thus, away from a $\mu$-null set $o$ admits no augmenting chains, and by Claim 2 $O_o$ is $\mu$-null.

\end{proof}
For \pmp graphs, we get the following.
\begin{cor} \label{cor:nashwilliams}
If $G$ is \pmp with measure $\mu$, then \[\o_\mu(G)\in[\alpha_\mu(G),\alpha_\mu(G)+1].\] In particular, if $\alpha_\mu(G)$ is not an integer, $\o_\mu(G)=\lceil\alpha_\mu(G)\rceil.$
\end{cor}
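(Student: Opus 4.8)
The plan is to derive Corollary~\ref{cor:nashwilliams} by combining the lower bound of Proposition~\ref{prop:cost} with the upper bound furnished by Theorem~\ref{thm:nashwilliams}, specialized to the \pmp case where $\rho=1$. Since $\mu$ is $G$-invariant we have $\rho=1$, so Proposition~\ref{prop:cost} immediately gives $\o_\mu(G)\geq\lceil\alpha_\mu(G)\rceil\geq\alpha_\mu(G)$, which is the left endpoint of the claimed interval.

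For the upper bound, I would set $k=\lceil\alpha_\mu(G)\rceil+1$ if $\alpha_\mu(G)$ is an integer, and $k=\lceil\alpha_\mu(G)\rceil$ otherwise; in both cases $k$ is an integer with $k>\alpha_\mu(G)=\rho^2\alpha_\mu(G)$ (using $\rho=1$). Theorem~\ref{thm:nashwilliams} then produces a Borel orientation $o$ with $\mu(\{x:\out_o(x)>k\})=0$, i.e. $\o_\mu(G)\leq k$. When $\alpha_\mu(G)$ is an integer this yields $\o_\mu(G)\leq\alpha_\mu(G)+1$; when it is not an integer it yields $\o_\mu(G)\leq\lceil\alpha_\mu(G)\rceil$, and combined with the lower bound $\o_\mu(G)\geq\lceil\alpha_\mu(G)\rceil$ we get equality. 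In all cases $\o_\mu(G)\in[\alpha_\mu(G),\alpha_\mu(G)+1]$, since $\lceil\alpha_\mu(G)\rceil\leq\alpha_\mu(G)+1$ and $\alpha_\mu(G)+1\leq\alpha_\mu(G)+1$.

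The only subtlety worth double-checking is the boundary behavior: when $\alpha_\mu(G)$ is an integer, one cannot take $k=\alpha_\mu(G)$ because the strict inequality $k>\rho^2\alpha_\mu(G)$ fails, so the best the theorem gives is $k=\alpha_\mu(G)+1$; this is exactly why the interval in the statement is closed and of length $1$ rather than pinned down to a single value. The ``in particular'' clause is then just the observation that for non-integer $\alpha_\mu(G)$ the floor of the upper bound and the ceiling of the lower bound coincide. I do not expect any real obstacle here — the work has all been done in Theorem~\ref{thm:nashwilliams} and Proposition~\ref{prop:cost}, and this corollary is a bookkeeping exercise tracking the ceiling function and the strict-versus-weak inequality at integer values of $\alpha_\mu(G)$.
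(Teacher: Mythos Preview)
Your proposal is correct and matches the paper's intended approach: the paper states the corollary without proof, treating it as immediate from Proposition~\ref{prop:cost} (lower bound) and Theorem~\ref{thm:nashwilliams} (upper bound) specialized to $\rho=1$, exactly as you do. Your case analysis on whether $\alpha_\mu(G)$ is an integer and your remark about why the strict inequality forces $k=\alpha_\mu(G)+1$ in the integer case are precisely the bookkeeping the paper leaves implicit.
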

It follows from this that any $d$-regular p.m.p.~graph can be generated by $\lceil (d+1)/2 \rceil$ functions. Similar results have been obtained by Greb\'ik and Pikhurko \cite[Theorem 1.7]{pg}. Using an idea from Conley--Tamuz \cite{ClintonsTrick}, we can get a Borel result for slow-growing graphs.
\begin{cor}\label{cor:borel}
If $G$ has subexponential growth and is $d$-regular, then
$\o_B(G)\in[\frac{d}{2},\frac{d}2+1].$
\end{cor}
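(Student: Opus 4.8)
The plan is to prove the lower bound by a soft expansion argument and the upper bound by re-running the construction from the proof of Theorem~\ref{thm:nashwilliams}, with the measure-theoretic ingredients (Claims 1 and 2 and the appeal to Borel--Cantelli) replaced by their purely combinatorial shadows. The point is that subexponential growth lets one upgrade the relevant ``almost everywhere'' statements to ``everywhere'' statements, and this upgrade is the idea we borrow from Conley--Tamuz \cite{ClintonsTrick}.

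For the lower bound: since $\o_B(G)\ge\o(G)$, it suffices to rule out an orientation $o$ with $\out_o(x)\le k$ for all $x$ when $k<d/2$. Such an $o$ has $\ind_o(x)\ge d-k>d/2$ everywhere, so for any finite $A\subseteq X$ the number of edges entering $A$ from outside is $\sum_{x\in A}\ind_o(x)-(\text{edges inside }A)\ge (d-k)|A|-\frac d2|A|=(\frac d2-k)|A|$; since each vertex outside $A$ emits at most $d$ of these, $|B^-(A)|\ge (1+c_d)|A|$ with $c_d=\frac{d-2k}{2d}>0$. Iterating from a single vertex gives $|B_n(x)|\ge|B^-_n(\{x\})|\ge(1+c_d)^n$, contradicting subexponential growth; hence $\o_B(G)\ge\lceil d/2\rceil\ge d/2$.

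For the upper bound I would fix $k:=\lfloor d/2\rfloor+1$, the least integer exceeding $d/2$, and build a Borel $k$-orientation as follows. The key combinatorial estimate, dual to the one above and playing the role of Claim~1, is: if every vertex of a finite set $A$ has outdegree at least $k$ with respect to an orientation $o$, then counting the edges leaving $A$ that are oriented outward shows $|B^+(A)|\ge(1+c_d')|A|$ for some $c_d'>0$. Now run the iteration of Lemma~\ref{lem:augment}: let $o_0$ be any Borel orientation and let $o_{m+1}=(o_m)'$ be obtained with parameter $m+1$, so $o_{m+1}$ has no augmenting chain of length $\le m$. If $x\in O_{o_m}=\{y:\out_{o_m}(y)>k\}$, then since $o_m$ has no augmenting chain of length $\le m-1$ every set $B^+_i(\{x\})$ with $i\le m-1$ consists of vertices of outdegree $\ge k$, so the estimate forces $|B_{m-1}(x)|\ge(1+c_d')^{m-1}$; this fails for large $m$ in the (fixed, subexponential-growth) component of $x$, so $O_{o_m}$ is eventually empty on each component. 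More carefully, the map $f$ from the proof of Lemma~\ref{lem:augment} shows a fixed edge $e$ can be flipped at stage $m$ only when some vertex $z$ within distance $m$ of $e$ lies in $O_{o_m}$; such a $z$ satisfies $|B_{m-1}(z)|\ge(1+c_d')^{m-1}$, while subexponential growth of the component of $e$ bounds $|B_{m-1}(z)|$ by $\gamma(2m+O(1))=e^{o(m)}$, a contradiction for large $m$. Hence $o_m(e)$ stabilizes for every edge $e$, so $o:=\lim_m o_m$ is a well-defined Borel orientation; the argument of Theorem~\ref{thm:nashwilliams} (now valid everywhere, since $o_m(e)$ converges for every $e$) shows $o$ has no augmenting chain of any length, and then the combinatorial estimate once more gives $O_o=\emptyset$, i.e.\ $o$ is a Borel $k$-orientation. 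Therefore $\o_B(G)\le\lfloor d/2\rfloor+1\le\frac d2+1$.

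I expect the main obstacle to be exactly the everywhere-convergence of this outer iteration, where Borel--Cantelli is no longer available: one must pit the linear radius $m$ within which a bad vertex can affect a fixed edge at stage $m$ against the exponential-in-$m$ lower bound on the ball of that bad vertex, and make the constants, the precise notion of subexponential growth, and the component-by-component bookkeeping line up. Everything else is a transcription of the already measure-free portions of the proof of Theorem~\ref{thm:nashwilliams}.
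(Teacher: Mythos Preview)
Your argument is correct, but it is not the paper's. The paper's three-sentence proof exploits the uniformity built into Theorem~\ref{thm:nashwilliams}: a \emph{single} Borel orientation $o$ already satisfies $\mu(\{x:\out_o(x)>k\})=0$ for every quasi-invariant $\mu$ with $k>\rho^2\alpha_\mu(G)$. The Conley--Tamuz trick, as used here, is then to observe that subexponential growth lets one place, at each vertex $x$, an \emph{atomic} quasi-invariant measure with $\rho$ close enough to $1$; since that measure gives $x$ positive mass, the a.e.\ bound becomes a pointwise bound at $x$. No combinatorics beyond what is already inside Theorem~\ref{thm:nashwilliams} is needed, and no new iteration is run. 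By contrast, you re-open the proof of Theorem~\ref{thm:nashwilliams} and replace its measure-theoretic Claims~1 and~2 by the finite expansion estimate $|B^+(A)|\geq \tfrac{2k}{d}|A|$ (valid once one starts from an orientation with each edge directed exactly once, a property preserved by flipping). This works, and in fact buys you more than you claim: with a uniform subexponential growth bound $\gamma$, your estimate forces $O_{o_m}=\emptyset$ once $(2k/d)^{m-1}>\gamma(m-1)$, so the outer iteration \emph{terminates} after finitely many stages rather than merely converging---there is no Borel--Cantelli issue to worry about at all, and the ``component-by-component bookkeeping'' you flag as the main obstacle is not needed. Your route is longer and bypasses the quasi-invariant machinery entirely; the paper's route is shorter but relies on that machinery as a black box.
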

\begin{proof}
In this case, for any vertex $x$ of $G$, there is an atomic measure with $\rho<\sqrt{1+d}$ whose support contains $x$. The Borel orientation given by Theorem \ref{thm:nashwilliams} witnesses $\o_\mu(G)\leq \frac{d}2+1$ for all of these atomic measures, so witnesses $\o_B(G)\leq \frac{d}{2}+1$. Similarly, Proposition \ref{prop:cost} gives a lower bound.
\end{proof}

We can also sharpen the analysis for expansive regular graphs. If $G$ is expansive, then edge boundaries in $G$ are large. So $\cost(G\res A)$ is bounded away from $\alpha_\mu(G)$ when $A$ has measure less than $1/2$. If $G$ is regular, the problem is symmetric enough that we only need to consider these small sets.

\begin{thm}\label{thm:expansive}
If $\mu$ is $G$-invariant, and $G$ is $d$-regular and expansive
then \[\o_\mu(G)=\left\lceil \frac{d}{2}\right\rceil.\]
\end{thm}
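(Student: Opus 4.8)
The plan is to show that for a $d$-regular expansive $G$ with $\mu$ invariant, the upper bound $\o_\mu(G) \leq \lceil d/2 \rceil$ holds; the lower bound is immediate from Proposition \ref{prop:cost} since $\alpha_\mu(G) = d/2$ for $d$-regular graphs, giving $\o_\mu(G) \geq \lceil d/2\rceil$. So the real content is the upper bound, and I would obtain it by revisiting the augmenting-chain machinery from the proof of Theorem \ref{thm:nashwilliams} with the threshold set to $k = \lceil d/2 \rceil$. Since $\rho = 1$ in the invariant case, the summability estimate in Theorem \ref{thm:nashwilliams} requires $k > \alpha_\mu(G) = d/2$, which fails exactly when $d$ is even and $k = d/2$. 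The point of expansiveness is to recover a strict inequality in Claim 1 / Claim 2 of that proof when we restrict attention to small sets.

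First I would record the key asymmetry: if $o$ is an orientation with outdegrees bounded by $k = \lceil d/2 \rceil$ everywhere except on $O_o$, then an augmenting chain runs from $O_o$ to $I_o$, and by $d$-regularity a vertex of outdegree $> k = \lceil d/2\rceil$ has indegree $< d - \lceil d/2 \rceil = \lfloor d/2 \rfloor \leq k$; so every vertex of $O_o$ already lies in $I_o$ with respect to the \emph{reversed} orientation. The strategy is thus to run the Lemma \ref{lem:augment} iteration but, crucially, to bound $\m(O_{o_n})$ using a sharpened version of Claim 1. Concretely: if every point of $A$ has outdegree $\geq k$ and $\mu(A) \leq 1/2$, I want $\m(B^+(A)) \geq (1 + \varepsilon)\,\m(A)$ for some fixed $\varepsilon > 0$ depending only on $\lambda := \lambda_\mu(G) > 0$, $d$, and $k$. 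Re-running the computation of Claim 1 with $\rho = 1$:
\begin{align*}
k\,\m(A) &\leq \int_{x \in A} \out_o(x)\, d\mu \leq \tfrac12\Big(\int_{x\in A}\out_o(x)\,d\mu + \int_{x\in B^+(A)\sm A}\ind_o(x)\,d\mu\Big) + \tfrac12\int_{x\in A}|G_x \sm A|\,d\mu \\
&\leq \cost\!\big(G\res B^+(A)\big)\,\m\big(B^+(A)\big) + \tfrac12\int_{x\in A}|G_x\sm A|\,d\mu,
\end{align*}
and now $\cost(G\res B^+(A)) \leq d/2$ always, while the boundary term is at least $\tfrac{\lambda}{2}\m(A)$ — wait, it goes the wrong way. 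The honest move is instead to bound $\cost(G\res B^+(A))$ \emph{strictly below} $d/2$ using expansiveness on the complement, i.e.\ $\cost(G\res B)\,\m(B) = \tfrac12(d\,\m(B) - \int_{x\in B}|G_x \sm B|\,d\mu) \leq \tfrac12(d\,\m(B) - \lambda\,\m(X\sm B))$ when $\mu(X\sm B) \leq 1/2$; applying this with $B = B^+_i(O_o)$ for small $i$ (so that $B$ stays below measure $1/2$) forces $\m(B^+_{i+1}(O_o)) \geq \frac{k}{d/2 - \delta_i}\,\m(B^+_i(O_o))$ with a genuine gain, and iterating gives exponential decay of $\m(O_o)$ in the length of the shortest augmenting chain, just as in Claim 2 but now also when $d = 2k$. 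With exponential decay in hand, the Borel--Cantelli argument at the end of Theorem \ref{thm:nashwilliams}'s proof goes through verbatim and produces, $\mu$-a.e., an orientation with no augmenting chains and hence $O_o$ null.

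The main obstacle I anticipate is the bookkeeping around the measure-$1/2$ threshold: the expansion inequality $\lambda_\mu$ only controls sets $A$ with $\mu(A) \leq 1/2$, but the sets $B^+_i(O_o)$ grow with $i$ and will eventually exceed $1/2$. I would handle this by splitting into cases — either the shortest augmenting chain is already short enough that all relevant $B^+_i(O_o)$ stay below $1/2$ (giving the decay directly), or $O_o$ itself must already have been forced large early, which together with the fact that its reverse-orientation image is contained in $I_o$ and the symmetry of $d$-regularity yields a contradiction or an equally good bound on the symmetric side (using the $\m(I_o)$ variant of Lemma \ref{lem:augment}(2)). Making this dichotomy clean — essentially: once $\m(O_o) \geq c$ for the constant $c$ coming from expansiveness, a short augmenting chain must exist, so the iteration cannot stall — is the delicate point, but it is exactly the measure-theoretic analog of the standard finite argument that an expander with bounded-degree orientation cannot have a large set of over-full vertices without a short augmenting path to fix it.
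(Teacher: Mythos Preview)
Your approach is the right one and matches the paper's: sharpen Claim 1 using expansiveness to get a genuine growth factor $c=\frac{d}{d-\lambda}>1$ when $\mu(B^+(A))\leq 1/2$, then iterate for exponential decay and finish with Borel--Cantelli. Your computation has a small slip (you apply the expansion lower bound to $X\sm B$ with $\mu(X\sm B)\leq 1/2$ but then take $B$ small; what you actually want is expansion on $B=B^+(A)$ itself when $\mu(B^+(A))\leq 1/2$, giving $\cost(G\res B^+(A))\leq (d-\lambda)/2$), but this is cosmetic.

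The one real gap is your treatment of the measure-$1/2$ threshold. The dichotomy you propose (``either $B^+_i(O_o)$ stays small, or $O_o$ was forced large early'') is vaguer than needed and does not obviously close. The clean observation, which the paper uses and which you are circling but do not state, is: if $o$ has no augmenting chain of length $<2n$, then $B^+_n(O_o)\cap B^-_n(I_o)=\emptyset$, since any vertex in the intersection would be the midpoint of such a chain. Hence one of the two sets has measure $\leq 1/2$. By the symmetry $I_o=O_{o^{-1}}$, $B^+_o=B^-_{o^{-1}}$ (which you noted), one may pass to $o^{-1}$ if necessary and assume it is $B^+_n(O_o)$; then Claim 1 iterates down the whole tower to give $\m(O_o)\leq c^{-n}/2$. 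Thus $\min(\m(O_o),\m(I_o))$ decays exponentially, the $\m(I_o)$-variant of Lemma \ref{lem:augment}(2) gives summability, and in the limit one of $O_o,I_o$ is null---if it is $I_o$, take $o^{-1}$ as the desired $\lceil d/2\rceil$-orientation. This disjointness is the missing hinge that makes your case split unnecessary.
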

\begin{proof}
It is enough to consider the even case. We modify the proof of Theorem \ref{thm:nashwilliams}, making use of the following symmetry. For any orientation $o$, let $o\inv=\{(x,y): (y,x)\in o\}$. That is, $o\inv$ is $o$ with every edge flipped. We have $\out_o(x)=\ind_{o\inv}(x)$, $B^+_o(A)=B^-_{o\inv}(A)$, and
\[I_{o}=\{x: \out_o(x)<d/2\}=\{x: \ind_o(x)>d/2\}=O_{o\inv}.\] 
\vspace{11pt}
\emph{Claim 1:} There is $c>1$ such that, for any orientation $o$, if $A\subseteq X$ satisfies ${0<\m\left(B^+(A)\right)\leq 1/2}$ and $\out_o(x)\geq d/2$ for all $x\in A$, then
\[c \m(A)\leq\m\left(B^+(A)\right) \] 

Let $\lambda$ be the expansion constant of $G$. Then
\begin{align*} \frac{d}{2}\m(A) & \leq\int_{x\in A}\out_o(x) \; d\mu\\ 
\; & \leq \frac{1}{2}\int_{x\in B^+(A)} \left(\deg(x)-|G_x\sm B^+(A)| \right) d\mu \\ 
\; & \leq \frac{1}{2}(d-\lambda)\m\left(B^+(A)\right). \end{align*}So, $c=\frac{d}{d-\lambda}$ works.

Symmetrically, if $\m\left(B^+(A)\right)\leq \frac{1}{2}$ and $\out_o(A)\leq d/2$ for $x\in A$, then applying Claim $1$ to $o\inv$ gives
\[c\m(A)\leq \m\left(B^-(A)\right).\] 

As in the proof of \ref{thm:nashwilliams}, we derive an exponential bound on $\min(\m(O_o),\m(I_o)).$
\vspace{11pt}

\emph{Claim 2:} There is $c>1$ such that, if $o$ admits no augmenting chains shorter than length $2n$, then  $\min\left(\m(I_o),\m(O_o)\right)< c^{-n}$.

In such an orientation, $B^+_n(O_o)\cap B^-_n(I_o)=\emptyset$. So one of them must have measure bounded by $1/2.$ Possibly replacing $o$ with $o\inv$, we can assume $\m\left(B^+_n(O_o)\right)\leq 1/2$. By claim 1,
\[\frac{1}{2}\geq \m\left(B^+_{n}(O_o)\right)\geq c^{n}\m(O_o) \]
for some $c>1.$ Thus, $\m(O_o)\leq \frac{1}{2c^n}$

Now again, iteratively produce $o_i$ with $o_{i+1}=o_i'$ as in Lemma \ref{lem:augment}. Claim 2 and Borel-Cantelli imply that the probability of an edge flipping infinitely often is 0. Let $o$ be the limiting orientation.

Since $o$ admits no augmenting chains, claim 2 implies one of $O_o$ or $I_o$ is null. If it is $O_o$, $o$ is a $\frac{d}{2}$-orientation. Otherwise $o\inv$ is a $\frac{d}{2}$-orientation.
\end{proof}

\section{Examples}

The most widely studied class of Borel graphs are the Schreier graphs associated to actions of finitely generated groups. Since we are dealing with orientations, we do not want generating sets for our groups to be symmetric. And to avoid some degenerate cases, we will also not allow the identity to be in our generating sets.

\begin{dfn}
For a countable group $\Gamma$, we say $E$ generates $\Gamma$ if \[\Gamma=\{a_1....a_n: a_i\in E\cup E\inv\}\] and $e\not\in E$. Importantly, we are allowed to take inverses of our generators.

If $\Gamma$ is generated by $E$ and $a:\Gamma\curvearrowright X$ is a Borel group action, then the associated Schreier graph on $X$ is
\[G(a, E):=\{\{x,y\}: (\exists g\in E)\; g\cdot x=y\}.\]

The graph $F(\Gamma, E)$ is the graph $G(a, E)$ where $a$ is the shift action of $\Gamma$ on the free part of $\N^\Gamma.$
\end{dfn}

For any free action $a$, $G(a, E)$ is regular with degree $|E\cup E\inv|$. In particular, if $E$ contains no involutions, $G(a, E)$ is $2|E|$-regular. So, if $a$ is a free \pmp action, then $\o_\mu\left(G(a, E)\right)\geq \frac{1}{2} |E\cup E\inv|$ by Proposition \ref{prop:cost}. Clearly, $\o_B\left(G(a, E)\right)\leq |E|$. We then have

\begin{prop}
If $E$ contains no involutions, and $a$ is a p.m.p action of $G$, then $\o_\mu\left(G(a,E)\right)=\o_B\left(G(a,E)\right)=|E|.$
\end{prop}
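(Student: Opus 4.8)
The plan is to pin $\o_\mu(G(a,E))$ between two bounds that both equal $|E|$, via the chain $|E|\le\o_\mu(G(a,E))\le\o_B(G(a,E))\le|E|$. Throughout I take $a$ to be (essentially) free, as in the discussion preceding the statement; this is needed, since, e.g., for the trivial action $G(a,E)$ is edgeless and the conclusion fails.

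\emph{Upper bound.} I write down an explicit Borel orientation. For $g\in E$, the map $x\mapsto g\cdot x$ is a Borel automorphism of $X$; put $o_g=\{(x,g\cdot x):x\in X\}$ and $o=\bigcup_{g\in E}o_g$, a Borel subset of $X^2$. This is an orientation of $G(a,E)$: given $\{x,y\}\in G(a,E)$ we have $y=g\cdot x$ for some $g\in E\cup E\inv$; if $g\in E$ then $(x,y)\in o_g$, and if $g\inv\in E$ then $x=g\inv\cdot y$ gives $(y,x)\in o_{g\inv}$. Moreover $o_x\subseteq\{g\cdot x:g\in E\}$, so $\out_o(x)\le|E|$ for every $x$, whence $\o_B(G(a,E))\le|E|$. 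Finally, for any Borel probability measure $\mu$ we trivially have $\o_\mu(G(a,E))\le\o_B(G(a,E))$ — a Borel $k$-orientation of $G(a,E)$ is in particular a Borel $k$-orientation of $G(a,E)\res X$ with $\mu(X\sm X)=0$ — so $\o_\mu(G(a,E))\le|E|$ as well.

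\emph{Lower bound.} Since $a$ is p.m.p., $\mu$ is $G(a,E)$-invariant, so Proposition \ref{prop:cost} gives $\o_\mu(G(a,E))\ge\lceil\alpha_\mu(G(a,E))\rceil\ge\lceil\cost(G(a,E))\rceil$, taking $A=X$ in the definition of $\alpha_\mu$. Because $a$ is free and $E$ has no involutions, the neighbors of any $x$ are exactly the $2|E|$ pairwise distinct points $g\cdot x$ with $g\in E\cup E\inv$, so $\cost(G(a,E))=\tfrac12\int_X|G_x|\,d\mu=|E|$. Hence $\o_\mu(G(a,E))\ge|E|$, and combining with the upper bound, all three quantities in the statement equal $|E|$.

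This argument has no real obstacle: it is an assembly of facts already in hand (the Edmonds-type bound of Proposition \ref{prop:cost} and the regularity of Cayley graphs of free actions). The only points needing a moment's care are checking that the canonical generator-orientation covers the edges arising from $E\inv$, the elementary monotonicity $\o_\mu\le\o_B$, and keeping track of the freeness hypothesis — without it $\cost(G(a,E))$ can fall below $|E|$ and the claimed equality breaks.
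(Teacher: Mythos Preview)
Your argument is correct and is essentially the same as the paper's: the paper gives no separate proof, but the paragraph immediately preceding the proposition supplies exactly your two bounds --- $\o_B(G(a,E))\le|E|$ via the generator orientation, and $\o_\mu(G(a,E))\ge\tfrac12|E\cup E^{-1}|=|E|$ via Proposition~\ref{prop:cost} and $2|E|$-regularity. You were also right to flag the implicit freeness assumption.
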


The situation is much more interesting for groups with $2$-torsion. We equip $\N^\Gamma$ with any non-atomic product measure $\mu$.

\begin{prop}\label{prop:z2}
Let $\Gamma=(\Z/2\Z)^{*2}:=\ip{a,b: a^2=b^2=1}$ and $E=\{a,b\}$. Then,
\[\o_\mu\left(F(\Gamma, E)\right)=2.\]
\end{prop}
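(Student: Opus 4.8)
The plan is to sandwich $\o_\mu(F(\Gamma,E))$ between $2$ and $2$ using the machinery of Section 2. For the lower bound, note that since $a$ is free, the Cayley graph $G = F(\Gamma,E)$ is $|E\cup E\inv|$-regular; because $a$ and $b$ are involutions, $E\cup E\inv = \{a,b\}$, so $G$ is $2$-regular. Hence $\alpha_\mu(G) = d/2 = 1$ by the computation following the definition of $\alpha_\mu$. Applying Proposition \ref{prop:cost} with $\rho = 1$ (the product measure is $G$-invariant since $a,b$ act by measure-preserving involutions on $\N^\Gamma$) gives only $\o_\mu(G)\geq 1$, which is not enough. So the real content of the lower bound is to show $\o_\mu(G) \neq 1$: a measurable $1$-orientation of a $2$-regular graph would orient each component (a line or cycle, since $\Gamma$ acts freely and the generators are involutions, components are bi-infinite lines) consistently, i.e.\ pick a ``direction'' on each line Borel-measurably modulo null. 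This amounts to a measurable choice of an end of each line, equivalently a Borel transversal-like selection, which fails for the (ergodic, hyperfinite but nowhere smooth) equivalence relation generated by the shift action of $(\Z/2\Z)^{*2}$; the standard ergodicity/mixing argument shows any such orientation would give a null-or-conull tail event violating ergodicity. This is the step I expect to be the main obstacle, since it requires care about exactly which measure-theoretic obstruction is being invoked — I would phrase it via: a $1$-orientation picks out, on a conull set, a well-defined ``successor'' function whose iterates generate the orbit equivalence relation by a single injection, forcing the relation to be (essentially) given by a $\Z$-action, and then invoke that the restriction of $E_G$ to a positive-measure set cannot be so generated because of the free $(\Z/2\Z)^{*2}$ structure — or more cheaply, exhibit explicitly that a measurable consistent orientation of the lines contradicts the ergodicity of the tail.

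For the upper bound $\o_\mu(G)\leq 2$, this is immediate and does not even need Section 2: orient every edge of $G$ in both directions is not allowed, but we can simply put $o = \{(x, a\cdot x) : x\in X\} \cup \{(x, b\cdot x): x \in X\}$, i.e.\ orient the $a$-edge at $x$ away from $x$ and the $b$-edge at $x$ away from $x$. Since $a^2 = b^2 = 1$, each vertex $x$ has exactly one outgoing $a$-edge and one outgoing $b$-edge, so $\out_o(x) = 2$ for every $x$; this $o$ is visibly Borel. (Alternatively, one gets $\o_\mu(G) \le \lceil \alpha_\mu(G)\rceil + \varepsilon$-type bounds from Corollary \ref{cor:nashwilliams}, which gives $\o_\mu(G) \in [1,2]$, and then the lower bound forces the value $2$; but the direct orientation above is cleaner and self-contained.)

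Putting the two halves together yields $\o_\mu(F(\Gamma,E)) = 2$. The only genuinely delicate point, as noted, is ruling out a measurable $1$-orientation; everything else is bookkeeping with the definitions of regularity, $\alpha_\mu$, and $\o_\mu$. I would structure the writeup as: (1) compute $G$ is $2$-regular and $\mu$ is $G$-invariant; (2) give the explicit Borel $2$-orientation for the upper bound; (3) argue by ergodicity that no positive-measure restriction of $G$ admits a Borel $1$-orientation, hence $\o_\mu(G) > 1$; (4) conclude.
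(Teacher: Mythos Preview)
Your outline is correct and, on the lower bound, converges to exactly what the paper does. The paper's argument is the clean realization of your ``or more cheaply, \dots\ ergodicity'' option: suppose $o$ is a measurable $1$-orientation, set $A=\{x:(x,ax)\in o\}$, observe that a $1$-orientation of a bi-infinite line is a consistent direction so $A$ is invariant under $ab$, and use that $ab$ acts ergodically on $(\N^\Gamma,\mu)$ to conclude $A$ is null or conull; either case forces some edge to be unoriented. Your upper bound via the explicit $o=\{(x,ax)\}\cup\{(x,bx)\}$ is fine and matches the trivial bound $\o_B(G(a,E))\le |E|$.

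One warning: your first proposed route for the lower bound---produce a $\Z$-action generating $E_G$ from the successor map and then ``invoke that \dots\ $E_G$ cannot be so generated because of the free $(\Z/2\Z)^{*2}$ structure''---does not work as stated. The group $(\Z/2\Z)^{*2}$ is the infinite dihedral group, which contains $\langle ab\rangle\cong\Z$ with index $2$; its shift orbit equivalence relation is hyperfinite and \emph{is} generated by a single Borel automorphism. There is no obstruction at the level of the abstract equivalence relation; the obstruction is specifically to a measurable $1$-orientation \emph{of this particular $2$-regular graphing}, and that is precisely what the $ab$-ergodicity argument detects. So drop that alternative and write up the ergodicity argument directly.
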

In particular, there are \pmp graphs with $\o_\mu(G)\not=\lceil\alpha_\mu(G)\rceil.$
\begin{proof}
Each component of $F(\Gamma,E)$ is an infinite path. Suppose toward contradiction that $o$ is a measurable $1$-orientation, so $o$ assigns a direction to each component of $F(\Gamma, E)$. Consider $A=\{x: (x,ax)\in o\}.$ The group element $ab$ acts ergodically and $A$ is invariant under this action. Thus almost every point is in $A$ or almost no point is in $A$. In either case, some edge must be missed by $o$, which is a contradiction.
\end{proof}

Conley has shown that $F(\Gamma, \{a,b,ab\})$ is a 4-regular graph with no measurable 2-orientation \cite{private}. It is natural to ask the following

\begin{prb}
Is there a $2d$-regular graph with no measurable $d$-orientation?
\end{prb}

Since submission of this article, Bencs, Hru\v skov\'a, and T\'oth have answered this question in the affirmative \cite{laszlo}.

Shift graphs of nonamenable groups are expansive (see e.g. {\cite[Section 3]{LN}}). So by Theorem \ref{thm:expansive}, $\o_\mu\left(F(\Gamma, E)\right)=\lceil \frac n2\rceil$ when $\Gamma=(\Z/2\Z)^{*n}$ with standard generating set $E$. Using a determinacy result, we can show that these graphs have strictly larger Borel orientation numbers.

\begin{thm}\label{thm:determinacy}
For $\Gamma=(\Z/2\Z)^{*n}=\ip{a_1,...,a_n: a_i^2=1}$, $E=\{a_1,...,a_n\}$,  \[\o_B\left(F(\Gamma, E)\right)=n\]
\end{thm}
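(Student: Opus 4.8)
The plan is to prove the lower bound $\o_B(F(\Gamma,E))\geq n$ by a Borel determinacy / Marks-style game argument, since the upper bound $\o_B(F(\Gamma,E))\leq |E|=n$ is the trivial ``point toward the generator'' orientation already noted for all Cayley graphs. So everything reduces to showing there is no Borel $(n-1)$-orientation of $F(\Gamma,E)$.

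\medskip

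My approach is to adapt the game-theoretic technique that Marks uses to produce $3$-regular graphs with $\o_B = 3$ but $\o_\mu\leq 2$; the group $(\Z/2\Z)^{*n}$ is precisely the free product that Marks's method is built for. Suppose for contradiction that $o$ is a Borel $(n-1)$-orientation of $F(\Gamma, E)$. For each vertex $x$ in the free part of $\N^\Gamma$, the edge set at $x$ consists of the $n$ edges $\{x, a_i x\}$ for $i=1,\dots,n$, and since $\out_o(x)\leq n-1$ at least one of these edges is oriented \emph{into} $x$. This gives, for $\mu$-generically many but in fact (by Baire category or by the Borelness of $o$) topologically many $x$, a Borel function selecting a ``missing'' direction; the standard move is to turn the assignment ``which generator index points into $x$'' into a Borel $n$-coloring-type object and then run the Marks game on the $n$-fold free product structure to derive a contradiction from Borel determinacy. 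Concretely: one sets up, for each $i\in\{1,\dots,n\}$, a two-player game $G_i^x$ played on $\N^\Gamma$ in which the players alternately build a point of the shift space together with a path in the Cayley graph, where Player I tries to force the edge across generator $a_i$ to be oriented one way and Player II the other; a winning strategy for one player, transported along the tree of words in $\Gamma$ via the free-product structure, produces a continuous embedding of a subshift on which $o$ cannot consistently satisfy the outdegree bound at every vertex, contradicting that $o$ is a genuine orientation (every edge must be oriented). The role of the hypothesis $a_i^2=1$ is that each generator contributes an unordered edge, so ``outdegree $\leq n-1$'' is exactly ``some generator-edge points in,'' which is the combinatorial pigeonhole that feeds the game.

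\medskip

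The key steps, in order, would be: (1) record the trivial upper bound $\o_B\leq n$; (2) assume a Borel $(n-1)$-orientation $o$ and extract from it, at each free point $x$, a nonempty set $S(x)\subseteq\{1,\dots,n\}$ of indices $i$ with $(a_i x, x)\in o$, noting $S$ is Borel-measurable in the appropriate sense; (3) set up the Marks free-product game associated to $\Gamma = (\Z/2\Z)^{*n}$ and the coloring/selection $S$, invoke Borel determinacy to get a winning strategy for one of the players in each coordinate game; (4) use the strategies to build a continuous (or Borel) map from a full subshift into $F(\Gamma,E)$ along which the constraints imposed by $S$ on both endpoints of infinitely many edges become jointly unsatisfiable — i.e.\ some edge is forced to be oriented in neither direction, or every vertex on a bi-infinite line is forced to have outdegree $n$; (5) conclude the contradiction, hence $\o_B(F(\Gamma,E))\geq n$, and combine with step (1).

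\medskip

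The main obstacle is step (3)--(4): correctly packaging the orientation constraint into a Marks-style game so that Borel determinacy applies and so that \emph{either} player's winning strategy yields a contradiction. Marks's original argument handles edge-colorings and matchings; here the ``color'' at a vertex is the set of in-pointing generators, and one must be careful that flipping between the two players corresponds to the two orientations of a single $a_i$-edge, and that the free-product word tree lets strategies in the $n$ separate coordinate games be amalgamated without conflict (this is exactly where $\Gamma$ being a free product of the $\Z/2\Z$'s, rather than an arbitrary group with torsion, is essential). Verifying that the resulting forced configuration genuinely violates ``$o$ is an orientation'' — rather than merely the outdegree bound on a null set, which a measurable $o$ could tolerate — is the crux, and is what distinguishes the Borel impossibility here from the measurable possibility coming from Theorem~\ref{thm:expansive}.
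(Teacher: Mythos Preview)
Your plan is headed in the right direction—this is indeed a Marks determinacy result—but your outline diverges from the paper's argument and leaves the decisive step unexecuted.

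The paper does not set up games from scratch. It argues by induction on $n$ (stepping down by two), invoking Marks's main lemma as a black box: writing $\Gamma=H*K$ with $H=\langle a_1,a_2\rangle$ and $K=\langle a_3,\dots,a_n\rangle$, and setting $A=\{x:(x,a_1x)\notin o\text{ or }(x,a_2x)\notin o\}$, the lemma furnishes an equivariant embedding either of $F(H,\{a_1,a_2\})$ into $A$ or of $F(K,E\setminus\{a_1,a_2\})$ into $X\setminus A$. Pulling $o$ back along this embedding produces, respectively, a Borel $1$-orientation of the $(\Z/2\Z)^{*2}$ shift (contradicting Proposition~\ref{prop:z2}, since $\o_B\geq\o_\mu=2$) or a Borel $(n-3)$-orientation of the $(\Z/2\Z)^{*(n-2)}$ shift (contradicting the inductive hypothesis). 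This is short and modular: the game-theoretic content is entirely encapsulated in the cited lemma.

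Your route—extract a Borel ``in-pointing index'' at each vertex and run $n$ coordinate games simultaneously—amounts to unpacking Marks's lemma and the induction in a single pass. It can be made to work, but it is more delicate than you suggest, and your proposal does not actually carry it out. One concrete pitfall: the map $x\mapsto\{i:(a_ix,x)\in o\}$ (or any Borel selection from it) is \emph{not} a proper $n$-coloring of $F(\Gamma,E)$, since $a_j$-neighbors may share an in-pointing index $i\neq j$; so you cannot simply quote the chromatic-number game. You need the game whose Player~II payoff in coordinate $i$ is ``$(a_ix,x)\in o$,'' then argue via the involution swap that Player~II wins every coordinate, and finally amalgamate strategies at a single vertex to force every incident edge inward there—contradicting that the same edge, viewed from the neighbor, is forced inward on that side too. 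None of this mechanism appears in your steps (3)–(4), which you yourself flag as the obstacle.

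Finally, drop the talk of ``$\mu$-generically'' and ``topologically many'': $o$ is Borel and defined everywhere on the free part, so every vertex has an in-pointing edge and a least such index can be selected Borel-measurably with no genericity needed. Any appeal to measure or category here would undermine the whole point, which is precisely that the Borel obstruction persists where the measurable one (Theorem~\ref{thm:expansive}) does not.
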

\begin{proof} It suffices to consider the case when $n$ is even. We proceed by induction. The base case is Proposition \ref{prop:z2}.

For the induction step, we use the main lemma from Marks's paper on Borel determinacy \cite[Lemma 2.1]{Marks}. Suppose the shift graph for $(\Z/2\Z)^{*(n-2)}$ does not admit an $(n-3)$-orientation, and suppose $o$ is an orientation of $F(\Gamma, E)$ with $\out_o(x)<n$ for all $x$. Set $H=\ip{a_1, a_2}$ and $K=\ip{a_3,...,a_n}$, and let \[A=\{x: (x,a_1x)\not\in o\mbox{ or }(x,a_2x)\not\in o\}.\] By the lemma,  there is an equivariant embedding either of $F(H, \{a_1,a_2\})$ into $A$ or of ${F(K, E\sm\{a_1,a_2\})}$ into $X\sm A$. 

In the first case, if $f$ is the embedding, let $\tilde o$ be the pullback orientation on $F(H,E)$, i.e. for $i=1,2$
\[(x,a_ix)\in\tilde o\lra (f(x),a_if(x))\in o.\] By the definition of $A$, $\tilde o$ is a 1-orientation of $F(H,E)$, contradicting the base case.

In the second case, again suppose $f$ is the embedding and let $\tilde o$  be the pullback orientation of ${F(\Gamma, E\sm \{a_1,a_2\})}$. Then for all $x$, 
\[(f(x),a_1 f(x)), (f(x),a_2 f(x))\in o\] so
\[|\tilde o_x|=\big|\{a_i: ( f(x), a_i f(x))\in o , i\not=1,2\}\big|= |o_{f(x)}|-2<n-2. \] But then we have an $n-3$ orientation of $F(K, E\sm\{a_1,a_2\})$, which contradicts the induction hypothesis.

\end{proof}

Another nice class of graphs generalizes the Hadwiger--Nelson graph.

\begin{dfn}
For a Polish group $\gimel$ and $E\subseteq \gimel$ Borel, the associated generalized distance graph is
\[D(\gimel,E):=\left\{\{x,y\}: xy\inv\in E\right\}.\]
\end{dfn}

When $E$ is countable, this is just the Schreier graph of $\ip{E}$ acting on $\gimel$ by translation. It turns out, when $E$ is uncountable, $\o_B\left(D(\gimel,E)\right)=|\R|$.

\begin{thm}\label{thm:distancegraphs}
For any Polish group $\gimel$ and $E\subseteq \gimel$ Borel, $\o_B\left(D(\gimel, E)\right)\leq \aleph_0$ if and only if $E$ is countable.
\end{thm}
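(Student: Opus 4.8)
The plan is to prove both directions separately. The easy direction is that if $E$ is countable, then $D(\gimel,E)$ is a locally countable Borel graph, and by Luzin--Novikov (or just directly, since $D(\gimel,E)$ is the Cayley graph of the countable group $\ip{E}$ acting freely on $\gimel$ by translation) it decomposes into countably many Borel partial injections; sending each edge along one of these injections yields a Borel orientation with countable outdegree, so $\o_B\left(D(\gimel,E)\right)\leq\aleph_0$. Actually one gets an $|E|$-orientation in the obvious way, but $\aleph_0$ suffices.

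For the converse, suppose $E$ is uncountable, and aim to show that any Borel orientation $o$ of $D(\gimel,E)$ has a vertex with uncountable outdegree. First I would reduce to the case $\gimel=E\cup\{1\}$ or otherwise arrange that the set of ``forward'' edges is a Borel subset of a suitable product; the key point is that since $E$ is an uncountable Borel subset of a Polish group, it contains a Cantor set, and the translation structure lets us find, for a fixed ``center'' $x$, a perfect set of neighbors $\{ey : e\in C\}$ of any given vertex $y=x$. I would look at the Borel set $R=\{(x,e)\in\gimel\times E : (x,ex)\in o\}$, together with $R'=\{(x,e): (ex,x)\in o\}$; since $o$ orients every edge, $R_x\cup (R')_x=E$ for every $x$, so for every $x$ at least one of $R_x$, $(R')_x$ is non-meager (indeed comeager in $E$ on a comeager-in-$E$ set), hence uncountable. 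By symmetry ($o\mapsto o\inv$ swaps the roles), it suffices to handle the case where $R_x$ is uncountable for some $x$ — but that says exactly that $x$ has uncountably many out-neighbors, giving $\out_o(x)>\aleph_0$, a contradiction. The subtlety is that non-meagerness of a section for \emph{some} $x$ is automatic by a Kuratowski--Ulam / category argument: $R\cup R'$ is a Borel set with every vertical section equal to all of $E$, so one of $R,R'$ is non-meager in $\gimel\times E$, hence has non-meager (uncountable) section on a non-meager set of $x$'s.

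I would structure the final argument as follows: (1) write $R, R'$ as above and note $R_x\cup R'_x = E$ for all $x$; (2) by Kuratowski--Ulam, since $R\cup R'=\gimel\times E$ is non-meager, one of $R$, $R'$ — say $R$, after possibly replacing $o$ by $o\inv$ — is non-meager in $\gimel\times E$; (3) by Kuratowski--Ulam again, $\{x : R_x \text{ is non-meager in } E\}$ is non-meager in $\gimel$, in particular non-empty; (4) a non-meager Borel subset of an uncountable Polish space ($E$ with its relative topology — here one needs $E$ Polish, which follows since $E$ Borel, or one passes to a dense $G_\delta$) is uncountable; (5) for such an $x$, $\out_o(x)=|R_x|>\aleph_0$. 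Then $\o_B\left(D(\gimel,E)\right)>\aleph_0$, i.e. $=|\R|$ since the total outdegree is at most $|E|\leq|\R|$.

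The main obstacle is point (2)--(4): one must be careful that $E$, as a Borel (not necessarily Polish) subspace, still supports a Baire-category argument — the clean fix is to replace $E$ by a dense $G_\delta$ subset $E_0\subseteq E$ in the relative topology refined to make $E$ clopen (using that any Borel set in a Polish space becomes clopen in a finer Polish topology, Kechris's change-of-topology theorem), so that $E_0$ is Polish and uncountable, and $R\cap(\gimel\times E_0)$ versus $R'\cap(\gimel\times E_0)$ still cover $\gimel\times E_0$. With $E_0$ perfect Polish, ``non-meager $\Rightarrow$ uncountable'' is immediate, and Kuratowski--Ulam applies on $\gimel\times E_0$. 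I expect the only real content is this topological bookkeeping; the combinatorial heart — that an orientation must push ``most'' of the uncountably many edges at some vertex outward — is a one-line pigeonhole once the category framework is set up.
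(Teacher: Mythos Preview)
Your overall strategy---parametrize edges by $\gimel\times E$, set $R=\{(x,e):(x,ex)\in o\}$ and $R'=\{(x,e):(ex,x)\in o\}$, and use Baire category on a perfect Polish refinement of $E$---is exactly the paper's approach. But your symmetry reduction in step (2) is a genuine gap.

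You write that ``$o\mapsto o\inv$ swaps the roles,'' so you may assume $R$ is non-meager. It is true that passing from $o$ to $o\inv$ interchanges $R$ and $R'$, but it does \emph{not} preserve the hypothesis you are trying to contradict: if $o$ has all outdegrees countable, $o\inv$ need not. Concretely, if $R'$ happens to be the non-meager one, your argument only produces an $x$ with $R'_x$ uncountable, i.e.\ uncountably many $e$ with $(ex,x)\in o$. That says $x$ has uncountable \emph{indegree}, which is perfectly compatible with $o$ being an $\aleph_0$-orientation. So you have not reached a contradiction.

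The fix---and this is precisely the symmetry the paper uses---is to observe that the map $\phi:\gimel\times E\to\gimel\times E$, $\phi(x,e)=(ex,e\inv)$, is a self-homeomorphism (here one uses that $E$ may be taken symmetric, closed, and perfect) satisfying $\phi^2=\mathrm{id}$ and $\phi(R)=R'$. Hence $R$ is meager if and only if $R'$ is; since $R\cup R'=\gimel\times E$, both are non-meager, and in particular $R$ is. Now your steps (3)--(5) go through verbatim. Equivalently, and closer to the paper's phrasing: if $o$ has countable outdegrees then Luzin--Novikov writes $R$ as a countable union of Borel function graphs, each meager in $\gimel\times E$ since $E$ is perfect, so $R$ is meager; applying $\phi$ shows $R'$ is meager too, contradicting $R\cup R'=\gimel\times E$.
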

\begin{proof}
Set $G=D(\gimel, E)$. If $E$ is countable, then $G$ is locally countable, and $\o_B(G)\leq \aleph_0$. So, suppose $E$ is uncountable and fix a sequence of Borel functions $\ip{f_i: \in\N}$ with $f_i\subseteq G$ for each $i$. We will show that $\bigcup_i f_i$ is not an orientation of $ G$. We may assume $E$ is symmetric, and by the perfect set theorem we may replace $E$ with one of its uncountable closed perfect subsets.

Equipped with the subspace topology, $ G\subseteq \gimel^2$ is homeomorphic to $\gimel\times E$. There are two natural identifications; a point $(x,e)\in \gimel\times E$ can map to either $(x,ex)$ or $(ex,x)$. We can translate between the two via a self-homeomorphism of $\gimel\times E$, $(x,e)\mapsto (ex,e\inv).$

Define $\tilde f_i:\gimel\rightarrow E$ by $\tilde f_i(g)=f_i(g)g\inv$. Then $\tilde f_i$ is Borel and so has a meager graph in $\gimel\times E$. That is, \[\{(x,e)\in\gimel\times E: (x,e)\not=(x,\tilde f_i(x))\}=\{(x,e): (x,ex)\not=(x,f_i(x))\}\] is comeager. Symmetrically, $\{(x,e): (x,ex)\not=(f_i(ex),ex)\}=\{(x,e): (ex,e\inv)\not\in \tilde f_i\}$ is comeager.  So, for a comeager set of $(x,e)$, then $(x,ex), (ex,x)\not\in \bigcup_i f_i.$
\end{proof}
For $G=\R^n$ and $E=S^{n-1}$, we get the following.
\begin{cor}
The unit distance graph in $\R^n$ ($n\geq 2$) does not have a countable Borel orientation.
\end{cor}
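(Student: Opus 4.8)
The plan is to derive this immediately from Theorem \ref{thm:distancegraphs}. I would first observe that the unit distance graph in $\R^n$ is exactly $D(\gimel, E)$ where $\gimel = \R^n$ viewed as a Polish group under addition, and $E = S^{n-1}$ is the unit sphere: indeed $x - y \in S^{n-1}$ if and only if $\|x - y\| = 1$. Note that $E$ is symmetric since $S^{n-1} = -S^{n-1}$, and it is Borel (in fact closed).

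Next I would check that $E = S^{n-1}$ is uncountable whenever $n \geq 2$. This is where the hypothesis $n \geq 2$ enters: $S^0 = \{-1, 1\}$ is finite, but for $n \geq 2$ the sphere $S^{n-1}$ is a nonempty compact manifold of positive dimension, hence uncountable (it even contains a homeomorphic copy of a circle). With $E$ uncountable and Borel, Theorem \ref{thm:distancegraphs} directly gives $\o_B\left(D(\R^n, S^{n-1})\right) \not\leq \aleph_0$, i.e.\ the unit distance graph in $\R^n$ has no countable Borel orientation.

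There is essentially no obstacle here; the only thing to be careful about is matching the conventions. The proof of Theorem \ref{thm:distancegraphs} reduces to the case where $E$ is closed with no isolated points via the perfect set theorem, and $S^{n-1}$ already is closed but does have isolated points only in the trivial case $n=1$ — for $n \geq 2$ it is perfect, so the reduction is transparent. Thus the whole corollary is a one-line specialization, and I would present it as such.

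\begin{proof}
Regard $\R^n$ as a Polish group under addition. Then the unit distance graph is $D(\R^n, S^{n-1})$, since $\|x-y\|=1$ exactly when $x-y\in S^{n-1}$, and $S^{n-1}$ is a closed (hence Borel) subset of $\R^n$. For $n\geq 2$, $S^{n-1}$ is uncountable, so by Theorem \ref{thm:distancegraphs} we get $\o_B\left(D(\R^n,S^{n-1})\right)>\aleph_0$.
\end{proof}
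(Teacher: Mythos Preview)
Your proposal is correct and matches the paper's approach exactly: the paper simply notes ``For $\gimel=\R^n$ and $E=S^{n-1}$, we get the following'' and states the corollary without further proof, which is precisely the specialization you carry out.
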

This differs from the classical case. In ZFC, the unit distance graph on $\R^n$ always admits a countable orientation, independent of the size of the continuum. See, for example, \cite[Theorem 6.2]{AZ}.

We end this section by noting that the orientation and sidewalk covering numbers can be arbitrarily far apart in the Borel setting.

\begin{thm}
For every $n\in \N\cup\{\aleph_0, |\R|\}$, there is an acyclic Borel graph (in particular a sidewalk) with $\o_B(G)=n$. Further, if $n\leq \aleph_0$, $G$ can be taken to be locally countable.
\end{thm}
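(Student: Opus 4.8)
The plan is to produce, for each $n$, an acyclic Borel graph $G$ whose Borel orientation number is exactly $n$, by exploiting the examples already built in this section. The point is that an acyclic graph is always classically $1$-orientable, so the obstruction to a low Borel orientation number must come entirely from definability, and we already have definable obstructions on hand. For $n = |\R|$, the unit distance graph in $\R^2$ from Theorem \ref{thm:distancegraphs} is a good starting template; the only issue is that it is not acyclic, so I would instead want a Borel forest with uncountably many "directions" forced at each point. I would take $\gimel$ and $E$ as in the proof of Theorem \ref{thm:distancegraphs} but replace the full distance graph by a Borel \emph{subforest} that still meets every $f_i$: concretely, choose for each $x$ a single edge $\{x, e_x x\}$ depending Borel-measurably on $x$ so that the resulting graph is acyclic (e.g.\ by a Borel transversal / vanishing argument on the translation action), while arranging that the family of selected edges is still rich enough that the meagerness argument of Theorem \ref{thm:distancegraphs} applies to rule out any countable orientation. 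That gives $\o_B(G) > \aleph_0$, hence $= |\R|$ since $\o_B$ is always at most the continuum.

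For finite $n$ and for $n = \aleph_0$ with $G$ locally countable, I would build $G$ as a disjoint union of "gadgets." The base gadget is an acyclic Borel graph $H$ with $\o_B(H) = 2$: here I would use the forest of lines of \cite[6.8]{KM} referenced in the introduction (an undirectable forest of lines is acyclic and has $\o_B \ge 2$, and trivially $\o_B \le 2$ for any graph of degree $\le 2$). To get $\o_B(G) = n$ for finite $n \ge 2$, take $n-1$ "independent" copies $H_1, \dots, H_{n-1}$ of this forest, living on disjoint standard Borel spaces, and glue them along a common vertex set so that each vertex has one edge into each $H_j$; acyclicity is preserved if the gluing is done carefully (say, identify the base points along a Borel isomorphism of the vertex spaces and verify no cycles are created, or more safely take a \emph{fibered product}-style construction where a cycle in $G$ would project to a cycle in some $H_j$). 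A Borel $(n-1)$-orientation of $G$ would have to "miss" at least one coordinate on a non-meager set and so restrict to a Borel $1$-orientation of some $H_j$ on a non-meager piece, which is impossible by the undirectability of the forest of lines (one needs the forest-of-lines obstruction to be local/hereditary to a non-meager set — this is the Baire-category ergodicity that powers these arguments, as in Proposition \ref{prop:z2}). For $n = 1$ just take a single edge, or the empty graph on one point; for $n = 0$ take a single vertex. For $n = \aleph_0$, take the disjoint union $\bigsqcup_{j \in \N} H_j$ of countably many such forests placed on a single standard Borel space so that the whole thing is locally countable and acyclic; any Borel $k$-orientation with $k$ finite fails on the $(k+1)$st summand, so $\o_B(G) \ge \aleph_0$, and $\le \aleph_0$ by Luzin–Novikov since $G$ is locally countable.

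The main obstacle is acyclicity of the glued gadget together with the \emph{hereditary} failure of $1$-orientability: I need the forest-of-lines obstruction not merely to say "$\o_B(H) \ge 2$" but "$H \restriction B$ has no Borel $1$-orientation for every non-meager Borel $B$" (or the analogous $\mu$-positive-measure statement, depending on which category/measure version of undirectability I use). I expect this to follow from the standard construction — the forest of lines is built so that its directability is governed by an ergodic-type action, exactly as in the $\Z/2\Z * \Z/2\Z$ example of Proposition \ref{prop:z2} — but it will require care to state and cite correctly, and to ensure the $n-1$ copies are "generic" relative to one another so that their obstructions do not interfere. A secondary technical point is checking that the union/gluing of the $H_j$ can genuinely be made acyclic while keeping all $n-1$ obstructions simultaneously active; the cleanest route is probably to realize $G$ on a product space $Z \times \{0,1,\dots\}$ and put the $j$-th forest structure on $Z \times \{j\}$ with a Borel bijection linking the fibers, then argue directly that any cycle would have to stay within one fiber.
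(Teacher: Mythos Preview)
Your approach has real gaps in both the uncountable and the finite cases.

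For $n = |\R|$: selecting a single edge $\{x, e_x x\}$ per vertex in a Borel way produces a graph generated by the single Borel function $x \mapsto e_x x$, hence one with $\o_B \leq 1$. The meagerness argument of Theorem~\ref{thm:distancegraphs} works because each section $G_x$ is a perfect set, and any per-vertex thinning destroys this. There is no evident way to pass to an acyclic Borel subgraph of $D(\gimel,E)$ while preserving the obstruction to countable orientation.

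For finite $n \geq 2$: the difficulty you flag in your last paragraph is genuine and is not resolved by a hereditary undirectability hypothesis. Even granting that each $H_j$ remains undirectable on every positive-measure (or non-meager) set, your pigeonhole step ``an $(n-1)$-orientation restricts to a $1$-orientation of some $H_j$ on a large set'' does not follow: the restriction $o \cap H_j$ is an orientation of $H_j$ with out-degrees ranging over $\{0,1,2\}$, and nothing forces out-degree $\leq 1$ on a large set for any fixed $j$. Pushing a coordinate-wise pigeonhole of this shape through is essentially what Theorem~\ref{thm:determinacy} does for $(\Z/2\Z)^{*n}$, and that required Marks's determinacy lemma, not a direct category computation. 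Your $\aleph_0$ case inherits the same problem if the summands are meant to be glued; and it is simply false as written if they are disjoint, since a disjoint union of graphs each with $\o_B = 2$ still has $\o_B = 2$.

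The paper avoids all of this by using free groups. For $n \leq \aleph_0$ it takes the shift graph $F(F_n, E)$ of the free group on $n$ generators: this is acyclic (a $2n$-regular tree on each orbit), locally countable, and has $\o_B = |E| = n$ by the proposition on torsion-free generating sets earlier in this section. For $n = |\R|$ it builds an acyclic distance graph directly: set $\Gamma = \lim_{\leftarrow} F_{2^m}$ along the maps $a_\sigma \mapsto a_{\sigma \res (m-1)}$ and let $E \subseteq \Gamma$ be the closed uncountable set of coherent sequences of generators. Every finite subset of $E$ freely generates a free subgroup, so $D(\Gamma,E)$ is acyclic, while Theorem~\ref{thm:distancegraphs} gives $\o_B(D(\Gamma,E)) > \aleph_0$.
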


\begin{proof}
For $n\leq\aleph_0$, $F_n$, the free group with $n$ generators, is torsion free. So if $E$ is the usual set of generators, $\o_B(F(F_n, E))=n$. And the Schreier graphs of $F_n$ actions are locally countable.

For $n=|\R|$, label the standard generators for $F_{2^n}$ as $E_n=\{a_\sigma: \sigma\in 2^n\}$ and let $g_n:F_{2^n}\rightarrow F_{2^{n-1}}$ be the homomorphism determined by $g_n(a_\sigma)=a_{\sigma\res (n-1)}$. Define

\[\Gamma=\lim_{\leftarrow} F_{2^n}=\{f\in \prod_n F_{2^n}:(\forall n)\; g_n(f(n))=f(n-1)\}\] \[E=\{f\in \Gamma: (\forall i)\; f(i)\in E_i\}.\] Then $E$ is closed in $\Gamma$ and uncountable, so by Theorem \ref{thm:distancegraphs}, $\o_B(G(\Gamma, E))>\aleph_0$. Also, since every finite subset of $E$ freely generates a free group, $D(\Gamma, E)$ is acyclic.
\end{proof}

\section{Complexity}

 We end with some metamathematical considerations. Since Borel sets admit $\bp11$ coding, we can consider the set of codes for Borel sets with various combinatorial properties. If, like Borel $k$-orientability or colorability for graphs, the property asks for some kind of Borel witness, the set of codes will usually be $\bs12$. Dichotomy results and effective witnesses give better upper bounds on complexity, and consequently lower bounds on complexity can be construed as anti-dichotomy or impossibility results. See \cite{TV} for more discussion.

We can give a strong complexity bound for countable orientability using effective methods. We use an alternate characterization for $\o_B(G)$ countable
\[\o_B(G)=\min\{|F|: F\mbox{ is a family of Borel functions generating }G\}.\] 
Likewise, define
\[\o_{\Delta^1_1}(G):=\min\{|F|: F\mbox{ is a uniformly $\Delta^1_1$ family functions generating }G\}.\] This may give strange values for graphs which are not countably orientable, but it will not matter for our discussion.

\begin{thm} 
If $G$ is $\Delta^1_1$, and $\o_B(G)\leq \aleph_0$, then $\o_{\Delta^1_1}(G)\leq \aleph_0$.
\end{thm}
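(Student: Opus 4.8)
The plan is to run a Gandy--Harrington / effective-descriptive-set-theory argument showing that a $\Delta^1_1$ graph which is countably orientable is already \emph{lightface} countably orientable by a uniformly $\Delta^1_1$ family of functions. First I would observe that $\o_B(G)\le\aleph_0$ for a locally countable $G$ is equivalent (via Luzin--Novikov) to $G$ being locally countable, so the content is: produce a uniformly $\Delta^1_1$ sequence of partial functions whose union is $G$. The natural candidate is to define, using $\Sigma^1_1$-uniformization and the boundedness/reflection apparatus, a $\Delta^1_1$ enumeration of the ``$\Delta^1_1$-in-a-parameter'' sections. Concretely, for each $x$ the section $G_x$ is countable, and I would want a canonical $\Delta^1_1$ way to list $G_x=\{f_0(x),f_1(x),\dots\}$; the Kondô--Addison uniformization theorem gives a $\Delta^1_1$ choice of a single point in any nonempty $\Sigma^1_1$ set, so the task is to iterate it to enumerate a countable section uniformly.

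The key steps, in order, would be: (1) reduce to the locally countable case, since otherwise $\o_B(G)>\aleph_0$ automatically and there is nothing to prove; (2) fix a $\Delta^1_1$ code for $G$ and consider the set $P=\{(x,y):(x,y)\in G\}$, which is $\Delta^1_1$, hence both $\Sigma^1_1$ and $\Pi^1_1$; (3) use the fact (a theorem of Luzin--Novikov in effective form, due essentially to the Kechris--Louveau--Woodin style analysis, cf. \cite[Proposition 3.10]{KM16} applied effectively) that a $\Delta^1_1$ set with countable sections is a countable union of $\Delta^1_1$ partial functions \emph{uniformly} — this is the crux; and (4) conclude that these functions generate $G$ and form a uniformly $\Delta^1_1$ family, so $\o_{\Delta^1_1}(G)\le\aleph_0$. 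An alternative to step (3), if one wants to avoid quoting an effective Luzin--Novikov directly, is a Gandy--Harrington forcing argument: force with nonempty $\Sigma^1_1$ subsets of $G$ to produce a generic edge, show the generic lies in a $\Delta^1_1$ partial function, and then run a reflection argument to see that countably many such functions suffice $G$-a.e., patching the remaining $\Delta^1_1$-small remainder by transfinite recursion on the effective Borel rank.

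The main obstacle I expect is step (3)/(4): getting the \emph{uniformity} of the family right. Producing, for a single $\Delta^1_1$ $G$, \emph{some} countable $\Delta^1_1$ generating family is not hard; the subtlety is that the index set of the family must itself be $\Delta^1_1$ (so that ``$f_n(x)=y$'' is $\Delta^1_1$ uniformly in $n$), and naive applications of uniformization introduce $\Pi^1_1$-completeness in the bookkeeping. The clean way around this is to exploit that $\o_B(G)\le\aleph_0$ is a genuine hypothesis: by the Luzin--Novikov theorem $G$ (being locally countable and $\Delta^1_1$, hence Borel) is a countable union of Borel partial functions, and one then invokes effective transfer — a $\Sigma^1_1$ statement (``$G$ is a countable union of graphs of Borel functions'') witnessed by a real is, by Kondô uniformization, witnessed by a $\Delta^1_1$ real, from which one extracts the uniformly $\Delta^1_1$ family. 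So the real work is packaging the classical decomposition as a single $\Sigma^1_1$ statement about codes and then applying $\Sigma^1_1$-uniformization to lightface-ify it; once that framing is in place the remaining verifications are routine.
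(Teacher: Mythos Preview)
Your step (1) is false, and this breaks the main approach. The condition $\o_B(G)\le\aleph_0$ does \emph{not} force $G$ to be locally countable: take $G$ to be the star at a single point $0$, i.e.\ $G=(\{0\}\times\baire)\cup(\baire\times\{0\})$ minus the diagonal. Orienting every edge toward $0$ gives $\out(x)\le 1$ everywhere, so $\o_B(G)=1$, yet $G_0$ is uncountable. More generally, an $\aleph_0$-orientation bounds outdegrees, not indegrees, so sections of $G$ can be as large as you like. Consequently your step (3)---applying effective Luzin--Novikov to $G$---fails outright, since $G$ need not have countable sections. (When $G$ \emph{is} locally countable the hypothesis $\o_B(G)\le\aleph_0$ is automatic and the theorem really is just effective Luzin--Novikov; the content is precisely the non-locally-countable case, and indeed the paper's $\bp11$-hardness argument in the corollary uses complete graphs on perfect sets.)

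Your fallback ``effective transfer'' is also not sound. The statement ``there exists a real coding a countable family of Borel functions whose union is $G$'' is not $\Sigma^1_1$: asserting that a real is a well-formed Borel code is already $\Pi^1_1$, and asserting that the union \emph{covers} $G$ adds another universal real quantifier, so you are at $\Sigma^1_2$. Kond\^o uniformization gives $\Pi^1_1$ elements of nonempty $\Pi^1_1$ sets, not $\Delta^1_1$ ones, and in any case a $\Delta^1_1$ \emph{code} for a Borel sequence does not yield uniformly $\Delta^1_1$ \emph{functions}. There is no general basis theorem that converts Borel witnesses into $\Delta^1_1$ witnesses---that is exactly what makes the theorem nontrivial.

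The paper's proof is the Gandy--Harrington argument you sketch only vaguely as an alternative, and the idea you are missing is how the Borel generators $f_i$ enter. One sets $A=G\setminus\bigcup\{D\in\Delta^1_1:D\text{ is a partial function}\}$ (equivalently, by reflection, one may replace $\Delta^1_1$ by $\Sigma^1_1$ here), assumes $A\ne\emptyset$, and forces below $A$. Since the $f_i$ generate $G$, some condition $p\le A$ decides $f_i(\dot x)=\dot y$ for a fixed $i$. But $p\subseteq A$ cannot be the graph of a partial function, so one finds $x$ with two distinct $y,y'$ in $p_x$; refining to freeze an initial segment of $f_i(\dot x)$ and choosing the $y$ that disagrees with it yields a condition forcing $f_i(\dot x)\ne\dot y$, a contradiction. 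The point is that the Borel functions are invoked only through what the generic forces about them---this is what lets you extract $\Delta^1_1$ information from a merely Borel hypothesis.
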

\begin{proof}

We want to consider generic edges in $G$. Let $\mathbf{P}_n$ be Gandy-Harrington forcing on $\baire^n$, i.e. forcing with nonempty $\Sigma^1_1$ subsets of $\baire^n$. Further, let $(\dot x, \dot y)$ be the canonical name for an $\mathbf{P}_2$-generic pair of reals. Recall that if $(x,y)$ is $\mathbf{P}_2$-generic, then $x$ and $y$ are separately $\mathbf{P}_1$-generic.

Suppose towards contradiction that 
\[A:=G\sm\bigcup\{D\in \Delta^1_1: D\mbox{ is the graph of a partial function}\}\not=\emptyset\] and $G$ is generated by $\{f_i:i\in\N\}$, with $f_i$ Borel. Note that, by the first reflection theorem \cite[Lemma 1.2]{HMS}, $A=G\sm \bigcup\{p\in \Sigma^1_1: p\mbox{ is the graph of a partial function}\}.$

Then, without loss of generality, we can find some $i$ and some Gandy-Harrington condition $p\leq A$ such that \[p\Vdash f_i(\dot x)=\dot y.\] Since $p$ is a nonempty $\Sigma^1_1$ subset of $A$, $p$ is not a partial function. So, we have a nonempty $\mathbf{P}_1$ condition:
\[U''=\{x: (\exists y,y')\; y\not=y' \mbox{ and }(x,y), (x,y')\in p\}.\]
We can refine $U''$ to freeze the first place $y,y'$ differ, i.e. for some $n$ and $U'\subseteq U''$ we get
\[U'\Vdash (\exists y,y')\; (y\res n)\not=(y'\res n)\mbox{ and }(\dot x,y),(\dot x,y')\in p.\] And we can further refine $U'$ to freeze the first $n$ digits of $f_i(x)$. There are $\sigma\in \N^n$ and $U\subseteq U'$ so that
\[U\Vdash f_i(x)\in N_\sigma,\;\left[ (\exists y,y')\; (y\res n)\not=(y'\res n)\mbox{ and }(x,y),(x,y')\in p\right].\]

Now set $q:=U\times (\baire\sm N_\sigma)\cap p.$ By construction, $q$ is nonempty. And if $(x,y)$ is $\mathbf{P}_2$-generic below $q$, then $x$ is $\mathbf{P}_1$-generic below $U$, so $f_i(x)\in N_\sigma$. Similarly, $y\in \baire\sm N_\sigma$. But then
\[q\Vdash f_i(\dot x)\in N_\sigma, f_i(\dot x)=\dot y, \mbox{ and } \dot y\not \in N_\sigma,\] which is a contradiction.
\end{proof}

\begin{cor} \label{thm:Ncomp}
The set of $\aleph_0$-orientable Borel graphs is $\bp11$-complete in the codes.
\end{cor}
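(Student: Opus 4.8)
The plan is to prove the two halves of completeness—membership in $\bp11$ and $\bp11$-hardness—separately, using the preceding theorem for the upper bound and an explicit reduction for the lower bound.

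For membership, I would argue that the set of codes for $\Delta^1_1$ graphs $G$ with $\o_B(G)\leq\aleph_0$ is $\bp11$. The naive description of $\aleph_0$-orientability quantifies existentially over a countable family of Borel functions, which is $\bs12$. The point of the preceding theorem is that this existential quantifier can be eliminated: $\o_B(G)\leq\aleph_0$ if and only if $G$ is a countable union of graphs of partial functions, if and only if (by the theorem, using the first reflection theorem as in its proof) $G=\bigcup\{p\in\bs11:p\text{ is the graph of a partial function}\}$, i.e. $G\sm\bigcup\{p\in\bs11:p\text{ a partial function}\}=\emptyset$. Being the graph of a partial function is a closed (hence arithmetic) condition on a set, so ``$G$ is covered by countably many $\bs11$ partial functions'' unwinds, via the effective perfect set / reflection machinery, to: for every $x$, the section $G_x$ is countable, equivalently $G$ has countable sections in the effective sense. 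Concretely $\o_B(G)\leq\aleph_0$ iff $G$ is locally countable (when $G$ is $\Delta^1_1$ this is where Luzin--Novikov and the theorem combine), and ``$G_x$ is countable for all $x$'' is the statement $\forall x\,\forall y\,[(x,y)\in G\to y\in\bigcup\text{(basic clopen sets meeting }G_x\text{ in a singleton)}]$, which is $\bp11$ by the Luzin--Novikov uniformization theorem and boundedness. I would present this as: $G$ is $\aleph_0$-orientable iff $G$ is locally countable, and the set of codes for locally countable $\Delta^1_1$ graphs is $\bp11$ (a standard consequence of Luzin--Novikov plus $\Sigma^1_1$-boundedness), with the nontrivial content being the earlier theorem showing this coincides with Borel $\aleph_0$-orientability for $\Delta^1_1$ graphs.

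For hardness, I would exhibit a Borel reduction from a known $\bp11$-complete set—the set of codes for countable $\bs11$ sets, or equivalently the set of trees on $\N$ with countably many branches, or the set of codes of $\Delta^1_1$ sets $C\subseteq\baire$ that are countable. Given a code for such a set $C\subseteq\baire$, I would build a Borel graph $G_C$ on a standard Borel space so that $G_C$ is $\aleph_0$-orientable (equivalently locally countable) iff $C$ is countable. The cleanest construction: take the graph on $\baire\times\baire$ (or a suitable subspace) in which $x$ is joined to $(x,c)$ for each $c\in C$—so the section at $x$ has size $|C|$. Then $G_C$ is locally countable iff $C$ is countable, and the whole assignment $C\mapsto G_C$ is Borel in the codes. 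One must double-check $G_C$ is genuinely $\Delta^1_1$ uniformly in the code and that ``$C$ countable'' is exactly the right side of a $\bp11$-complete set; the standard fact is that $\{e:W_e^{\baire}\text{ is countable}\}$ (codes for countable $\bs11$ sets) is $\bp11$-complete, which I would cite.

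The main obstacle I anticipate is the membership direction, specifically making precise that Borel $\aleph_0$-orientability of a $\Delta^1_1$ graph is equivalent to an arithmetic-in-$\Sigma^1_1$ (hence $\bp11$) condition rather than merely $\bs12$. This is exactly what the preceding theorem buys us: it shows the a priori $\bs12$ witness (a Borel generating family) can always be replaced by a $\Delta^1_1$ one, and combined with the first reflection theorem the existence of such a family reduces to a statement with no essential second-order existential quantifier, namely local countability, which Luzin--Novikov renders $\bp11$. Once that equivalence is in hand, both the upper bound and the reduction are routine, so the bulk of the work—and essentially all of it already done—is in the theorem just proved.
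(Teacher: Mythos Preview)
Your plan is correct, but both halves follow a somewhat different route from the paper's.

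For the upper bound, you pass through the equivalence ``$\o_B(G)\leq\aleph_0$ iff $G$ is locally countable'' and then cite that local countability is $\bp11$ in the codes. This works, but note that it does not actually require the preceding theorem, despite your framing: the equivalence with local countability is just classical Luzin--Novikov, and the $\bp11$-ness of ``all sections countable'' follows from the effective perfect set theorem (a $\Sigma^1_1(c)$ set is countable iff contained in $\Delta^1_1(c)$). The paper instead uses the preceding theorem directly: relativized to a code $c$, it says a $\Delta^1_1(c)$ graph with $\o_B(G)\leq\aleph_0$ has a uniformly $\Delta^1_1(c)$ generating family, so $\aleph_0$-orientability becomes ``$(\exists f\in\Delta^1_1(c))\,[f$ codes a generating family$]$'', and a quantifier over $\Delta^1_1(c)$ is $\bp11$ (Kleene). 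Your detour through local countability is arguably cleaner, but it makes the corollary independent of the theorem it is a corollary of.

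For hardness, you reduce from the $\bp11$-complete set of codes for countable $\bs11$ sets, building a graph whose section at each vertex is a copy of $C$. The paper instead reduces from well-foundedness: for a closed $E\subseteq\baire$ (given by a tree), the complete graph on $E\times\baire$ is $\aleph_0$-orientable iff $E=\emptyset$, i.e.\ iff the tree is well-founded. Both reductions are standard and equally easy; the paper's has the minor advantage that emptiness of a closed set is the most canonical $\bp11$-complete problem and the map $T\mapsto K_{[T]\times\baire}$ is transparently Borel on trees.
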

\begin{proof}
By the preceding theorem, $c$ codes a graph with $\o_B(G)\leq \aleph_0$ if and only if
\[(\exists f\in \Delta^1_1(c)) \; f\mbox{ codes a countable generating family for }G.\] Coding a countable generating family is $\bp11$, and an existential quantifier over $\Delta^1_1(c)$ is equivalent to a countable quantifier and a universal quantifier. So being $\aleph_0$-orientable is $\bp11.$

We can reduce well-foundedness to $\aleph_0$-orientability as follows. If $E\subseteq \baire$ is closed, then the complete graph on $E\times \baire$ is countably orientable if and only if $E$ is empty. The map $E\mapsto K_{E\times\baire}$ can be carried out in a Borel way on the level of trees.
\end{proof}

Unfortunately, the above proof does not yield a satisfying dichotomy.

\begin{thm} \label{thm:1comp}
The set of locally countable graphs with $\o_B(G)\leq 1$ is $\bd12$ in the codes.
\end{thm}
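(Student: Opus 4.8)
The plan is to establish both a $\bs12$ and a $\bp12$ upper bound for the set of codes in question; intersecting them gives $\bd12$.

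The $\bs12$ bound is the easy half and just unwinds the definitions. An orientation $o$ of $G$ with $\sup_x \out_o(x)\le 1$ is literally the graph of a partial function $f$, and $o$ orients $G$ exactly when $G=\{(x,y):y=f(x)\text{ or }x=f(y)\}$. So $\o_B(G_c)\le 1$ holds iff there is a real $d$ that is a Borel code (a $\bp11$ condition on $d$) for a function $f$ generating $G_c$ in this sense (a condition on $(c,d)$ that is Borel, using that membership in the graph coded by $c$ and the values of the function coded by $d$ are $\bd11$ relative to $(c,d)$). Quantifying $\exists d$ over this $\bp11$ matrix yields a $\bs12(c)$ predicate. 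The side conditions built into ``the set of locally countable graphs'' --- that $c$ codes a graph, and that it is locally countable --- are themselves $\bp11(c)$ (the latter by Luzin--Novikov), so conjoining them preserves $\bs12$.

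For the $\bp12$ bound we use a dichotomy of Hjorth and Miller. It supplies a canonical obstruction --- either a single locally countable Borel graph $\mathbb{L}$ (an ``undirectable forest of lines''), or a Borel-parametrized family of these, together with the classical ``bicycle'' obstruction --- such that for every locally countable Borel graph $G$ exactly one of the following holds: (a) $\o_B(G)\le 1$; or (b) there is a continuous embedding of (a member of) the obstruction onto a union of connected components of $G$ (equivalently, onto a full induced subgraph with matching degrees). The two horns are genuinely exclusive: the obstruction has Borel orientation number $>1$ robustly, and a Borel $1$-orientation of $G$ would restrict along such an embedding to one of the obstruction. Hence $\o_B(G)>1$ iff such an embedding exists, and ``there is a continuous $\iota$ (plus, in the parametrized case, a choice of obstruction index) with the stated properties'' is a real quantifier over a Borel/coanalytic matrix, hence $\bs12(c)$; therefore $\o_B(G)\le 1$ is $\bp12(c)$. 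If one prefers to invoke the Hjorth--Miller dichotomy only for pseudoforests, first observe that ``$G$ is a pseudoforest'' --- no finite subgraph has more edges than vertices --- is $\bp11(c)$, hence $\bp12(c)$, and conjoin it with the negation of ``there is a line-type embedding'', which is again $\bp12(c)$.

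I expect the main obstacle to be this second step, specifically invoking the dichotomy correctly: one must ensure the morphism appearing in the negative horn is strong enough (an embedding onto a component-invariant set, or a degree-preserving induced embedding) that non-$1$-orientability genuinely pulls back --- this is exactly what makes horns (a) and (b) mutually exclusive and hence lets one read off ``$\o_B(G)\le 1$'' as the negation of a $\bs12$ statement. Folding the ``bicycle'' obstruction (which blocks $\o_B(G)\le 1$ already in ZFC) into the same ``there is an obstruction'' statement, and checking that the resulting existential-over-Borel statement stays within $\bs12$, are routine but should be spelled out.
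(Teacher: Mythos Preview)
Your $\bs12$ half is fine and matches what the paper does implicitly. The overall shape of your $\bp12$ half is also right---use a Hjorth--Miller dichotomy so that the failure of 1-orientability is witnessed by a $\bs12$ embedding statement---but there is a genuine gap in how you invoke it.

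The Hjorth--Miller dichotomy in \cite{HjorthMiller} is \emph{not} stated for 1-orientability; it is a dichotomy for \emph{Borel end selection}: either a locally countable Borel graph admits a Borel end selector, or a canonical obstruction embeds. You assert (without proof) a dichotomy whose positive horn is ``$\o_B(G)\le 1$,'' and your self-diagnosis focuses on whether the negative horn is strong enough for the two horns to be \emph{exclusive}. But exclusivity is the easy direction; the hard direction is \emph{exhaustiveness}, and that is exactly where the gap lies. To derive a 1-orientability dichotomy from the end-selection dichotomy you need the intermediate characterization
\[
\o_B(G)\le 1 \iff G \text{ is a sidewalk (pseudoforest) and admits a Borel end selector,}
\]
which you neither state nor prove. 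This is the substantive content of the paper's argument: given a single generating function $g$, the forward orbits $\langle g^n(x)\rangle$ yield an end selector (constant on the cycle if there is one); conversely, from an end selector on a sidewalk one manufactures a single generating function by sending each vertex one step toward the selected end (or toward/around the unique cycle). Once this equivalence is in hand, Hjorth--Miller gives that ``admits a Borel end selector'' is $\bd12$ in the codes, ``is a sidewalk'' is $\bp11$ hence $\bd12$, and the conjunction finishes the proof. Your sketch gestures at the two obstructions (bicycle vs.\ undirectable lines) but never supplies this bridge, so as written the $\bp12$ bound is not established.
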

\begin{proof}
Say that a graph admits Borel end selection if there is a Borel function $f:X\rightarrow X^\N$ so that 
\begin{enumerate}
    \item $f(x)$ is either injective (as a sequence), or constant
    \item If $f(x)$ is injective, then $f(x)(0)=x$ and $\{f(x)(n), f(x)(n+1)\}\in G$ for all $n$.
    \item If $x,y$ are connected in $G$ then, for any finite $S\subseteq X$,  $f(x)$ and $f(y)$ are eventually in the same component of $X\sm S.$ 
\end{enumerate} In particular if $f(x)$ is constant, so is $f(y)$, and $f(x)=f(y)$.

We first show that $\o_B(G)\leq 1$ if and only if $G$ is a sidewalk and admits Borel end selection.

If $G$ is generated by a single function $g$, then $\ip{g^n(x):n\in\N}$ is either injective or is eventually periodic. We can then define a Borel end selector by $f(x)(n)=g^n(x)$ if this is injective or $f(x)(n)$ is the least value $g^n(x)$ repeats (according to some Borel linear order on $X$) otherwise.  

And if $G$ is a sidewalk and admits an end selector $f$, we can generate $G$ with a single function $g$ as follows. Let $A$ be the set of points which are connected to a cycle or which have $f(x)$ constant. Then $G\res A$ has a smooth connectedness relation and it is straightforward to find a function generating the edge set. On $X\sm A$, $f(x)$ is always injective, and $X\sm A$ is acyclic. So if $\{x,y\}\subseteq X\sm A$ is an edge, $f(x)(1)=y$ or $f(y)(1)=x$ (otherwise $f(x), f(y)$ are in different $X\sm \{x\}$ components). So $x\mapsto f(x)(1)$ generates $G\res(X\sm A).$

By a dichotomy theorem of Hjorth and Miller \cite{HjorthMiller}, the set of codes for locally countable graphs admitting a Borel end selector is $\bd12$. The result follows as the set of codes for locally countable sidewalks is $\bd12$.
\end{proof}

Unfortunately, Hjorth and Miller's methods do not seem to translate to an effective proof. Somewhat unusually, then, we have two upper bounds on complexity without all the attending niceness theorems.

\begin{prb}
Find a dichotomy characterizing countable orientability.
\end{prb}
\begin{prb} \label{o1effective}
Is it the case that if $G$ is locally countable, $\Delta_1^1$, and $\o_B(G)\leq 1$, then $\o_{\Delta^1_1}(G)\leq 1$?
\end{prb}

Since submission, these problems have been resolved by the author \cite{effectivization}. The answer to problem \ref{o1effective} is yes, even if we remove the local countability assumption.

Hyperfinite Borel equivalence relation are equivalent to countable Borel relations admitting a  1-orientable graphing. Determining the complexity of such relations is a longstanding open problem. That problem is still out of reach, but we can settle the complexity of general relations admitting a 1-orientable graphing. The following proposition says it is enough to give a lower bound on the complexity of equivalence relations admitting a Borel selector.

\begin{prop}
For $E$ smooth (but not necessarily countable), the following are equivalent:
\begin{enumerate}
    \item $E$ admits a $1$-orientable graphing
    \item $E$ is treeable
    \item $E$ admits a Borel selector
\end{enumerate}
\end{prop}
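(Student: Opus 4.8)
The plan is to prove the three equivalences by showing $(1)\Rightarrow(2)\Rightarrow(3)\Rightarrow(1)$, leaning on the hypothesis that $E$ is smooth throughout. The implication $(1)\Rightarrow(2)$ is essentially definitional: if $G$ is a $1$-orientable graphing of $E$, then $\o_B(G)\le 1$, so by the characterization in the proof of Theorem \ref{thm:1comp} (or directly, since a $1$-orientation is a generating function when the graphing is locally countable, and more generally $G$ must be a sidewalk), $G$ contains no bicycles; to get an actual \emph{tree} graphing we pass to the Borel acyclic subgraph obtained by deleting, in each component, one edge from the unique cycle if it has one — using a Borel orientation/selection on the cycles as in the proof of Theorem \ref{thm:1comp}. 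The direction $(3)\Rightarrow(1)$ is also soft: if $s:X\to X$ is a Borel selector for $E$ (so $s$ is $E$-invariant and $s(x)\mathrel{E}x$), then the function $x\mapsto s(x)$ is a single Borel function whose symmetrization generates an equivalence relation refining $E$; to get \emph{all} of $E$ one builds a graphing by a standard ``marker/tree from a selector'' construction — since $E$ is smooth we may assume $X\subseteq \baire$ and that $s(x)$ is the $E$-least element in a fixed Borel linear order, and then connect each $x$ to its parent along the canonical path toward $s(x)$. That parent function is a single Borel function, hence gives a $1$-orientable (indeed acyclic) graphing.

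The substantive implication is $(2)\Rightarrow(3)$: a smooth treeable equivalence relation admits a Borel selector. Here is the approach I would take. Fix a Borel treeing $T$ of $E$ and, using smoothness, fix a Borel reduction $\pi:X\to\baire$ of $E$ to equality, so each $E$-class $[x]$ gets a canonical ``target'' real $\pi(x)$. Within each class, $T\res[x]$ is a tree (connected and acyclic as a graph), so between any two vertices there is a \emph{unique} finite path in $T$. The idea is to use this uniqueness to manufacture, in a Borel way, a distinguished vertex in each class. One natural attempt: the reduction $\pi$ is constant on classes, so it cannot directly pick a vertex; instead, pick in each class the vertex minimizing some combinatorial quantity. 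But trees need not have a canonical center in general (a bi-infinite line has none), so one must use smoothness more cleverly — for instance, within the class, use the Borel linear order on $X$ (from $X\subseteq\baire$) to well-order... no, classes may be uncountable and order-isomorphic to bad orders.

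The clean route, and the one I expect to be the crux, is to realize that ``treeable + smooth'' should be handled by first reducing to the \emph{countable} case via a Borel transversal of a sub-relation. Concretely: a treeing has at most countable degree is \emph{not} assumed here (the proposition explicitly allows $E$ non-countable), so I would instead argue as follows. Since $E$ is smooth it is in particular \emph{hypersmooth}/Borel-reducible to equality; by a theorem on smooth equivalence relations, $E$ admits a Borel selector \emph{iff} it has a Borel transversal, and a smooth $E$ has a Borel transversal exactly when the quotient Borel structure is standard — which it is, as the image $\pi[X]$ is analytic and $\pi$ is a Borel reduction, so by Luzin--Novikov-type uniformization of the graph of $\pi^{-1}$ restricted along the treeing $T$ we can Borel-select one point per class. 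Thus the real content is a uniformization: the set $\{(r,x): \pi(x)=r,\ x\text{ is $T$-adjacent only in prescribed ways}\}$ has countable-to-one sections once we use acyclicity of $T$ to prune to a ``spanning ray/selector skeleton'', and Luzin--Novikov gives a Borel uniformizing function $r\mapsto$ (a vertex in the class coded by $r$), which composed with $\pi$ yields the selector. I expect the main obstacle to be exactly this step — making the pruning of $T$ canonical and Borel so that the relevant sections become countable, thereby reducing a possibly-uncountable treeable smooth relation to an application of Luzin--Novikov; everything else is bookkeeping with smoothness and the sidewalk/end-selection facts already established for Theorem \ref{thm:1comp}.
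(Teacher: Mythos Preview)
Your treatment of $(1)\Rightarrow(2)$ matches the paper exactly: a $1$-orientable graphing is a sidewalk, and deleting the least edge of each cycle in a Borel way yields a treeing. For $(3)\Rightarrow(1)$ you overcomplicate things. If $s$ is a Borel selector then the star graph $\{\{x,s(x)\}:x\in X\}$ already generates \emph{all} of $E$, not a proper refinement: if $x\mathrel{E}y$ then $s(x)=s(y)$, so $x$ and $y$ are both adjacent to the same vertex. This star is generated by the single function $s$, hence is $1$-orientable (and already a tree). The paper does exactly this in one line; your ``parent along a canonical path'' detour is unnecessary.

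The real issue is $(2)\Rightarrow(3)$. The paper does not prove this at all --- it simply invokes Hjorth's theorem \cite{HjorthTreeable} that a smooth equivalence relation is treeable if and only if it admits a Borel selector, and this is a genuinely nontrivial result. Your attempt to prove it directly has a gap you yourself flag but do not close: Luzin--Novikov requires countable sections, and here the classes may be uncountable. The proposed fix --- ``prune $T$ so the relevant sections become countable'' --- is the entire content of the implication, and you give no mechanism for it. Note that smoothness alone does not give a selector (there are smooth closed equivalence relations with no Borel transversal), so the treeing must be used in an essential way; nothing in your sketch actually uses acyclicity of $T$ beyond gesturing at it. Unless you are prepared to reprove Hjorth's theorem, the correct move is to cite it as a black box, which is what the paper does.
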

\begin{proof}
$(2)$ and $(3)$ are equivalent by a result of Hjorth \cite{HjorthTreeable}. To see that $(3)$ implies $(2)$, note that if $f$ is a Borel selector for $E$, then $f$ generates a graphing of $E$.  

Now we show $(3)$ implies $(1)$. Suppose that $\o_B(E)\leq 1$ and fix an graphing $G$ witnessing this. Since $G$ is a sidewalk, each component is either a tree or contains a unique cycle. Tossing out the least edge in each cycle (relative to some Borel linear ordering of the underlying space) yields a treeing of $E$.
\end{proof}

For $X$ Polish, let $F(X)$ be the Effros Borel space of closed subsets of $X$. 

\begin{thm} \label{thm:Selcomp}
The set 
\[\mathbf{Sel}:=\{E\in F(\baire^2):E\mbox{ is an equivalence relation with a Borel selector}\}\] is $\bs12$ complete. 
\end{thm}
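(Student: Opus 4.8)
The plan is to prove $\mathbf{Sel}$ is $\bs12$ by giving the obvious existential description and then to prove completeness by a reduction from a known $\bs12$-complete set. For the upper bound, note $E \in \mathbf{Sel}$ exactly when there is a Borel function $f : \baire \to \baire$ such that $f$ is a selector for $E$: that is, $x \mathrel{E} y \iff f(x) = f(y)$ and $x \mathrel{E} f(x)$ for all $x$. Coding a Borel function is $\bp11$, and the condition ``$E$ is a closed equivalence relation and $f$ is an $E$-selector'' is arithmetic in the codes for $E$ and $f$; prefixing a ``there exists a Borel $f$'' quantifier gives a $\bs12$ description. One should be slightly careful that the Effros Borel space coding of $E$ and the $\bp11$-coding of $f$ interact well, but this is routine.

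For completeness, the natural target is the $\bs12$-complete set of (codes for) trees $T$ on $\omega$ such that $[T]$ contains a superperfect (or just a nonempty $\bs12$) branch — more cleanly, I would reduce from the set $\mathrm{WF}^c$-style set used elsewhere, or best of all from the set of ill-founded-with-a-$\bd11$-branch trees. Actually the cleanest route is to reduce from the canonical $\bs12$-complete set $\{x \in \baire : \exists y \in \Delta^1_1(x)\, \varphi(x,y)\}$ packaged as: the set of closed $C \subseteq \baire$ such that $C$ has a nonempty ``Borel-parametrized'' structure admitting a selector. Concretely, given a tree $S$ on $\omega \times \omega$, I would build in a Borel way (on the level of trees, via the Effros Borel structure) a closed equivalence relation $E_S$ on $\baire^2$ whose components are indexed by $[S]$-like data, arranged so that a Borel selector for $E_S$ exists if and only if there is a Borel uniformization of the projection of $[S]$ — and by the theory of $\bp11$ sets without Borel uniformizations (e.g.\ the Luzin example), the latter is a $\bs12$-complete condition. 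The key gadget: make each $E_S$-class look like a ``line'' or ``star'' so that picking a selector point in each class is equivalent to choosing, Borel-uniformly, an element of a corresponding section.

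The main obstacle I expect is engineering $E_S$ to be genuinely \emph{closed} (not merely Borel) while still encoding a uniformization problem faithfully — closed equivalence relations are quite rigid, so one must be clever about how the parameter $S$ is wired into a closed subset of $\baire^2 \times \baire^2$. A promising fix is to use the observation from the preceding proposition: for smooth $E$, having a selector is equivalent to being treeable / having a $1$-orientable graphing. So instead of directly controlling selectors, I would first reduce a $\bs12$-complete problem to ``this closed smooth $E$ is treeable,'' perhaps by coding branches of a tree as potential treeings and using a Hjorth-style obstruction (a closed hyperfinite-looking relation that is treeable iff some auxiliary Borel choice exists). The $\bd12$ complexity bound for $1$-orientable \emph{locally countable} graphs (Theorem \ref{thm:1comp}) does not apply here because we are not assuming local countability, which is exactly what leaves room for $\bs12$-completeness — I would make sure the constructed $E_S$ is non-$\sigma$-bounded (uncountable classes) to stay outside the scope of that theorem.

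Finally, I would assemble the two halves: the $\bs12$ upper bound from the first paragraph and a continuous reduction $S \mapsto E_S$ from a fixed $\bs12$-complete set, checking that $S \mapsto E_S$ is continuous for the Effros Borel structure and that $S$ lies in the complete set iff $E_S \in \mathbf{Sel}$. As a sanity check on the reduction's direction, note that ``admits a Borel selector'' should be \emph{positively} correlated with the existence of a branch/uniformization, so the target complete set should be one phrased as an existential (branch exists), matching $\bs12$. I would close by remarking, as the excerpt already flags, that this simultaneously shows that the class of equivalence relations admitting a Borel selector is $\bs12$-complete, independently of the treeing reformulation.
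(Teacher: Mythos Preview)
Your overall shape is right and in fact matches the paper's approach: reduce a uniformization problem to $\mathbf{Sel}$ by turning sections $R_x$ into equivalence classes, so that a selector is exactly a Borel choice of one point per section. The paper does precisely this via
\[(x,y)\,E_R\,(x',y')\ :\Longleftrightarrow\ x=x'\ \wedge\ \big[(x,y),(x,y')\in R\ \vee\ y=y'\big],\]
which is visibly closed when $R$ is closed; so your worry that ``closed equivalence relations are quite rigid'' and that one must be clever to keep $E_S$ closed is misplaced, and the detour through treeability is unnecessary.

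Where your plan is genuinely incomplete is the \emph{source} of hardness. You never commit to a specific $\bs12$-complete set, and ``the Luzin example'' is a single instance of a $\Pi^1_1$ set without Borel uniformization, not a completeness theorem. The paper invokes Adams--Kechris: the set $\mathbf{FBU}$ of closed $R$ with \emph{full domain} admitting a Borel uniformization is $\bs12$-complete. But the star gadget above reduces $\mathbf{Uni}$ (partial uniformization) to $\mathbf{Sel}$, not $\mathbf{FBU}$, and one still needs $\mathbf{FBU}\leq_B\mathbf{Uni}$. That step is not automatic: one must extend $R$ to a full-domain $R'$ so that no Borel uniformization can live over the added part. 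The paper does this with a ``noise'' set $N$ whose sections $N_{(x,y)}$ avoid $\Delta^1_1(x,y)$, setting $R'=(\baire\times R)\sqcup N$; any Borel uniformizer of $R'$ is $\Delta^1_1$ in some parameter $p$, hence cannot land in $N$ over $\{p\}\times\baire$, forcing it to uniformize $R$ there. This is the one nontrivial idea your sketch is missing.
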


\begin{proof}

We prove this in two steps. Define
\[\mathbf{Uni}:=\{R\in F(\baire^2): E \mbox{ admits a Borel uniformization}\}.\] We will show
\[\mathbf{FBU}\leq_B\mathbf{Uni}\leq_B\mathbf{Sel}\] where $\mathbf{FBU}$ is the set of relations with full domain admitting Borel uniformization. Then, by a theorem of Adams and Kechris \cite{AK}, $\mathbf{Sel}$ is $\bs12$ complete.

\paragraph{$\mathbf{FBU}\leq_B\mathbf{Uni}$:} We want to take a relation $R$ and extend it to a relation $R'$ with full domain in such a way that the $R'$ cannot be uniformized over all the points added to the domain. If $R$ has cofinite domain this cannot be done, so we will replace $R$ by $\baire\times R$, and then add noise to extend it to a full domain relation.

Let $N\subseteq \baire^3$ be such that $N_{(x,y)}\subseteq \baire\sm \Delta^1_1(x,y)$. Note that if $f$ is a $\Delta^1_1(p)$ function whose graph is contained in $N$, then $\operatorname{dom}(f)\cap \{p\}\times \baire=\emptyset$.   

Given $R\subseteq \baire^2$ closed, let $R'=\left(\baire\times R\right)\sqcup N$. If $R$ admits full Borel uniformization, say via $f$, then so does $R'$, via $f'(x,y)=f(y)$. If $R'$ admits Borel uniformization, say via $f\in\Delta^1_1(p)$, then for any $x$, \[f(p,x)\in R'_{(p,x)}\cap \Delta^1_1(p,x)\subseteq R'_{(p,x)}\sm N_{(p,x)}= R_x.\] So $R$ admits full Borel uniformization via $f'(x)=f(p,x).$

Identifying $\baire^2$ with $\baire$ via a Borel isomorphism as usual, the map $R\mapsto R'$ is a reduction from $\mathbf{FBU}$ to $\mathbf{Uni}$.

\paragraph{$\mathbf{Uni}\leq_B\mathbf{Sel}$:} If $R\subseteq\baire^2$
 is closed, define 
 \[(x,y) E_R (x',y') :\lra x=x'\wedge \left[ (x,y), (x,y')\in R\vee y=y'\right].\] Then, $E_R$ is a closed equivalence relation.
 
 If $R$ admits Borel uniformization, say via $f$, then $E_R$ has a Borel selector $g$ defined by
 \[g(x,y)=\left\{\begin{array}{ll} (x,f(x)) & (x,y)\in R \\ (x,y) & else \end{array}\right.\]
 
 If $E_R$ admits a selector, $g$, then $R$ admits uniformization $f$ via $f(x)=y:\lra g(x,y)=(x,y)\wedge (x,y)\in R$.
 
 Again, $\baire^4$ and $\baire^2$ can be identified, and the map $R\mapsto E_R$ is Borel. So we have a reduction as claimed.
\end{proof}

Note that, by Harrington--Kechris--Louveau \cite{HKL}, the set of smooth equivalence relations is $\bp11$ in the codes. So the preceding theorem gives a strong reason why admitting a selector is not the same as being smooth.

\end{document}